\renewcommand{\p@enumii}{}
\def\@enum@{\list{\csname label\@enumctr\endcsname}%
{\usecounter{\@enumctr}\def\makelabel##1{
\normalfont\ignorespaces\emph{{##1}~}}
\setlength{\labelsep}{3pt}
\setlength{\parsep}{0pt}
\setlength{\itemsep}{0pt}
\setlength{\leftmargin}{0pt}
\setlength{\labelwidth}{0pt}
\setlength{\listparindent}{\parindent}
\setlength{\itemsep}{0pt}
\setlength{\itemindent}{0pt}
\topsep=3pt plus 1pt minus 1 pt}}
\renewcommand{\epsilon}{\ensuremath{\varepsilon}}
\renewcommand{\phi}{\ensuremath{\varphi}}
\renewcommand{\to}{\ensuremath{\longrightarrow}}
\newcommand{\Z}{\ensuremath{\mathbb Z}}
\newcommand{\St}[1][2]{\ensuremath{\mathbb S}^{#1}}
\newcommand{\aut}[1]{\ensuremath{\operatorname{\text{Aut}}\left({#1}\right)}}
\def\@map#1#2[#3]{\mbox{$#1 \colon\thinspace #2 \to #3$}}
\def\map#1#2{\@ifnextchar [{\@map{#1}{#2}}{\@map{#1}{#2}[#2]}}
\newcommand{\ang}[1]{\ensuremath{\left\langle #1\right\rangle}}
\newcommand{\set}[2]{\ensuremath{\left\{#1 \,\mid\, #2\right\}}}
\newcommand{\setang}[2]{\ensuremath{\ang{#1 \,\mid\, #2}}}
\newtheoremstyle{theoremm}{}{}{\itshape}{}{\scshape}{.}{ }{}
\theoremstyle{theoremm}
\newtheorem{thm}{Theorem}
\newtheorem{lem}[thm]{Lemma}
\newtheorem{prop}[thm]{Proposition}
\newtheoremstyle{remark}{}{}{}{}{\scshape}{.}{ }{}
\theoremstyle{remark}
\newtheorem{rem}[thm]{Remark}
\newtheorem{rems}[thm]{Remarks}
\newtheoremstyle{comment}{}{}{\bfseries}{}{\bfseries}{:}{ }{}
\theoremstyle{comment}
\newcommand{\reth}[1]{Theorem~\protect\ref{th:#1}}
\newcommand{\relem}[1]{Lemma~\protect\ref{lem:#1}}
\newcommand{\resec}[1]{Section~\protect\ref{sec:#1}}
\newcommand{\resubsec}[1]{Subsection~\protect\ref{subsec:#1}}
\newcommand{\rerems}[1]{Remarks~\protect\ref{rems:#1}}
\newcommand{\req}[1]{equation~(\protect\ref{eq:#1})}
\newcommand{\reqref}[1]{(\protect\ref{eq:#1})}
\begin{document}

\title{The R$_{\infty}$-property for braid groups over orientable surfaces}

\author{KAREL~DEKIMPE\\
KU Leuven Campus Kulak Kortrijk,\\
Etienne Sabbelaan 53, 8500 Kortrijk, Belgium.\\
e-mail:~\texttt{karel.dekimpe@kuleuven.be}\vspace*{4mm}\\
DACIBERG~LIMA~GON\c{C}ALVES\\
Departamento de Matem\'atica - IME-USP,\\
Rua~do~Mat\~ao~1010~CEP:~05508-090  - S\~ao Paulo - SP - Brazil.\\
e-mail:~\texttt{dlgoncal@ime.usp.br}\vspace*{4mm}\\
OSCAR~OCAMPO~\\
Universidade Federal da Bahia,\\
Departamento de Matem\'atica - IME,\\
Av. Milton Santos~S/N~CEP:~40170-110 - Salvador - BA - Brazil.\\
e-mail:~\texttt{oscaro@ufba.br}
}

\date{\today}

\maketitle

\begin{abstract} 
Let $\Sigma_{g,p}$ be an orientable surface of genus $g$ and of finite type without boundary (i.e.\  an orientable closed surface with a finite number $p$ of points removed). 
In this paper we study the R$_{\infty}$-property for the surface pure braid groups $P_n(\Sigma_{g,p})$ as well as for the full surface braid groups $B_n(\Sigma_{g,p})$. We show that, with few exceptions, these groups have the R$_{\infty}$-property. 
\end{abstract}

\let\thefootnote\relax\footnotetext{2020 \emph{Mathematics Subject Classification}. Primary: 20E36; Secondary: 20F36, 20E45.

\emph{Key Words and Phrases}. Artin braid group, Surface braid group, R$_{\infty}$-property.}

\section{Introduction}\label{intro}

Consider a group $G$ and a fixed endomorphism $\phi$ of $G$. Two elements $x$ and $y$ of $G$ are said to be twisted conjugate (via $\phi$) if and only if there exists a $z\in G$ such that $x = z y \phi(z)^{-1}$. The relation of being twisted conjugate is easily seen to be an equivalence relation and the number of equivalence classes (also referred to as twisted conjugacy classes or Reidemeister classes) is called the Reidemeister number $R(\phi)$ of $\phi$. This Reidemeister number is either a positive integer or $\infty$.

These Reidemeister numbers appear naturally in algebraic topology and to be more precise in Nielsen--Reidemeister fixed point theory. Here one is interested in the number of fixed point classes of a selfmap $f$ of a space $X$. This number is called the Reidemeister number $R(f)$ of the map $f$, and one can show that $R(f)=R(f_\ast)$, where $f_\ast\colon \pi_1(X) \to \pi_1(X) $ is the induced endomorphism on the fundamental group $\pi_1(X)$ of $X$. 

A group $G$ is said to have the R$_\infty$-property in case $R(\phi)=\infty$ for all automorphisms $\phi \in \aut{G}$.
 The study of groups with that  property was initiated by Fel'shtyn and Hill \cite{FH} and since the beginning of this century there has been a growing interest in the study of groups having this R$_\infty$-property.

A non-exhaustive list of examples of groups of which we know that they have the R$_\infty$-property, are the non-elementary Gromov hyperbolic groups \cite{F,LL}, most of the Baumslag--Solitar groups \cite{FG1}  and groups quasi--isometric to Baumslag--Solitar groups \cite{TW},\
 generalized Baumslag--Solitar groups \cite{L}, many linear groups \cite{FN,N}, several families of lamplighter groups \cite{GW,T}, some spherical and affine Artin-Tits groups \cite{CS},  pure virtual twin groups \cite{NNS}, and virtual braid (twin) groups \cite{DGO2}.

Emil Artin introduced the braid groups of the $2$-disc  in 1925 and continued the study of them in 1947~\cite{A1,A2}. These groups have since then been referred to as Artin Braid groups. Zariski~\cite{Z} was the first to study braids on surfaces and this was later further extended  by Fox and Neuwirth to braid groups of arbitrary topological spaces by using configuration spaces as follows~\cite{FoN}. Let $M$ be a topological space, and let $n\in \mathbb N$. The \textit{$n$th ordered configuration space of $M$}, denoted by $F_{n}(M)$, is defined by:
\begin{equation*}
F_n(M)=\left\{(x_{1},\ldots,x_{n})\in M^{n}\,|\,x_{i}\neq x_{j}\,\, \text{if}\,\, i\neq j,\,i,j=1,\ldots,n\right\}.
\end{equation*}
The \textit{$n$-string pure braid group $P_n(M)$ of $M$} is defined by $P_n(M)=\pi_1(F_n(M))$. The symmetric group $S_{n}$ on $n$ letters acts freely on $F_{n}(M)$ by permuting coordinates, giving rise to the $n$th unordered configuration space
$F_n(M)/S_{n}$. The \textit{$n$-string braid group $B_n(M)$ of $M$} is then defined as $B_n(M)=\pi_1(F_n(M)/S_{n})$. This gives rise to the following short exact sequence:
\begin{equation}\label{eq:ses}
1 \to P_{n}(M) \to B_{n}(M) \stackrel{\sigma}{\longrightarrow}S_{n} \to 1.
\end{equation}

The map $\map{\sigma}{B_{n}(M)}[S_{n}]$ is the standard homomorphism that associates to any braid in $B_{n}(M)$ a permutation in $S_{n}$ and $\textrm{Ker}(\sigma)=P_n(M)$.

 When $M=D^2$ (the disc)  then $B_n(D^2)$ (resp.\ $P_n(D^2)$) is the classical Artin braid group denoted by $B_n$ (resp.\ the classical pure Artin braid group denoted by $P_n$).

The R$_{\infty}$-property was studied for Artin braid groups in \cite{FG} for the whole group $B_n$ and in \cite{DGO} for the pure subgroup $P_n$. 
Let $S_{g,p}$ be a surface of finite type, i.e. $S_{g,p}$ is a closed surface of genus $g$ (possibly non-orientable) with a finite number ($p \geq 0$) of points removed. 
After having obtained the results for the Artin braid groups, it is now a natural question to study the R$_{\infty}$-property for the surface braid groups (resp.\ surface pure braid groups) $B_n(S_{g,p})$ (resp.\ $P_n(S_{g,p})$). 
For the case where $n=1$ we have that $P_1(S_{g,p})=B_1(S_{g,p})=\pi_1(S_{g,p})$ and here
the result is well known, and the information (in the orientable case) is given in the tables below. 
So, from now on, we will assume that $n\geq 2$ unless it is explicitly  stated otherwise.

In this paper we will study the $R_\infty$ property only for the case of orientable surfaces. To do this, we divide the orientable surfaces of finite type into three families.
\begin{itemize}
	\item[$\mathcal{F}_1$:] The punctured sphere  $\mathbb{S}^2$ with $p$ points removed for $p=0,1,2$.  

\item[$\mathcal{F}_2$:] 
\begin{itemize}
	\item[a)] Orientable closed surfaces different from $\mathbb{S}^2$, $T^2$. 
	\item[b)] Orientable punctured surfaces $\Sigma_{g,p}$ where $g$ is the genus and $p$ is the number of punctures in the closed surface $\Sigma_g$, for: 
\begin{itemize}
	\item[i)] $g=0$ and $p\geq 3$,
	\item[ii)] $g=1$ and $p\geq 2$,
	\item[iii)] $g\geq 2$ and $p\geq 1$. 
\end{itemize}
	\end{itemize}
	
\item[$ \mathcal{F}_3$:] The torus $\Sigma_{1,0}=T^2$ and $\Sigma_{1,1}$ the torus minus one point.

\end{itemize}

  In the following table we record the information that we know until now about the R$_{\infty}$-property for the surface braid groups, 
$P_n(\Sigma_{g,p})$, $B_n(\Sigma_{g,p})$.

\begin{table}[htb] 
\centering
\begin{tabular}{c c c c c c c}
\hline
   & &  $ $ & &  & & \vspace*{-0.3cm}\\
\textcolor{blue}{Family} & & \textcolor{blue}{$\pi_1$ has R$_{\infty}$} & & \textcolor{blue}{$P_n(\Sigma_{g,p})$ has R$_{\infty}$} & & \textcolor{blue}{$B_n(\Sigma_{g,p})$ has R$_{\infty}$}  \\[0.2cm] \hline \hline 
   & &  $ $ & &  & & \vspace*{-0.3cm}\\
$\mathcal{F}_1$ & & No & & Unknown   & &  Unknown  \\ 
 & &  & & except, yes  for $S=\Sigma_{0,2}$  & &  except, yes  for $S=\Sigma_{0,2}$ \\ 
 & &  & & and $n\geq 2$, and yes for  & &  and $n\geq 2$, and yes for  \\ 
\vspace{0.2cm}
 & &  & & $S=\Sigma_{0,1}$ iff $n\geq 3$  & &  $S=\Sigma_{0,1}$ iff $n\geq 3$ \\ 
$\mathcal{F}_2$ & & Yes & & Unknown & &  Unknown   \\
\vspace{0.2cm}
 & &  & &  & &  except, yes  for $S=\Sigma_{0,3}$  \\
$\mathcal{F}_3$ & & No for $T^2$ & & Unknown & & Unknown \\ 
                & & Yes for $\Sigma_{1,1}$ & &  & &  \vspace{0.2cm}\\ 
\hline
\end{tabular}
\caption{The R$_{\infty}$-property for $P_n(\Sigma_{g,p})$ and $B_n(\Sigma_{g,p})$ before this paper.}
\label{tab:info1}
\end{table}

\begin{rem} The exceptional cases which appear in the table above come from the fact that
for $n\geq 2$, the groups $P_n(\Sigma_{0,1})$, $P_n(\Sigma_{0,2})$, $B_n(\Sigma_{0,1})$, $B_n(\Sigma_{0,2})$ and $B_n(\Sigma_{0,3})$ have the R$_{\infty}$-property (see \cite[Theorem~1]{CS}), since there are isomorphisms among some surface braid groups and Artin-Tits groups: $P_n(\Sigma_{0,1}) \cong P(A_{n-1})$, $P_n(\Sigma_{0,2}) \cong P(B_n)$, $B_n(\Sigma_{0,1}) \cong A(A_{n-1})$, $B_n(\Sigma_{0,2})\cong A(B_n)$ and $B_n(\Sigma_{0,3})\cong A(\widetilde{C}_n)$. 
In the case of $\Sigma_{0,1}$ the result was first demonstrated in \cite{FG} for the Artin braid group and in \cite{DGO} for the pure Artin braid group.
\end{rem}

The reason for dividing the surfaces into these three families is because we need different techniques to deal with the surfaces of  family $\mathcal{F}_1$ and those of family $\mathcal{F}_2$. The paper does not contain new results on the two braid groups of the two surfaces of family $\mathcal{F}_3$  as this is still work in progress.

The main results of this paper are formulated below. 
\begin{thm}\label{th:purerinfty}   
Let $\Sigma_{g,p}$ be a finite type surface which  belongs to $\mathcal{F}_1\cup \mathcal{F}_2$. 
The surface pure braid group $P_n(\Sigma_{g,p})$ has the R$_{\infty}$-property if and only if one of the statements below holds:
\begin{enumerate}
\item $\Sigma_{g,p}$ belongs to  $\mathcal{F}_2$ and $n\geq 1$,
\item $\Sigma_{g,p} =\Sigma_{0,0}= \mathbb{S}^2$ and $n\geq 4$,
\item  $\Sigma_{g,p} =\Sigma_{0,1}=\mathbb{S}^2\setminus\{x_1\}$ and $n\geq 3$,
\item  $\Sigma_{g,p} =\Sigma_{0,2}=\mathbb{S}^2\setminus\{x_1, x_2\}$ and $n\geq 2$.
\end{enumerate}
\end{thm}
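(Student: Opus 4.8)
The plan is to treat the two families separately, since the tools available are quite different, and to organise each case around the well-developed machinery of Fadell--Neuwirth fibrations, centre/abelianisation computations, and characteristic subgroups.

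\smallskip

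\textbf{Family $\mathcal{F}_2$.} For a surface $\Sigma_{g,p}$ in $\mathcal{F}_2$ the base group $\pi_1(\Sigma_{g,p})$ already has the R$_\infty$-property (this is recorded in Table~\ref{tab:info1} and follows from hyperbolicity of the relevant $\pi_1$ in the non-orientable-free and non-elementary cases). The idea is to push this property up the Fadell--Neuwirth tower
\[
1 \to P_{n-1}(\Sigma_{g,p}\setminus\{x_1\}) \to P_n(\Sigma_{g,p}) \to \pi_1(\Sigma_{g,p}) \to 1 .
\]
First I would argue that $P_{n-1}(\Sigma_{g,p}\setminus\{x_1\})$, viewed as the kernel, is a \emph{characteristic} subgroup of $P_n(\Sigma_{g,p})$: one expects it to be described intrinsically, e.g.\ as an iterated commutator-type subgroup or via the centre and the lower central series, using that the quotient $\pi_1(\Sigma_{g,p})$ is centreless and has trivial abelianisation-torsion in the relevant range. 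Granting this, any automorphism $\phi$ of $P_n(\Sigma_{g,p})$ descends to an automorphism $\bar\phi$ of the quotient $\pi_1(\Sigma_{g,p})$, and the standard short-exact-sequence inequality $R(\phi)\ge R(\bar\phi)$ (valid whenever the kernel is $\phi$-invariant) gives $R(\phi)=\infty$. The case $n=1$ is exactly the statement that $\pi_1(\Sigma_{g,p})$ has R$_\infty$, which is the content of Table~\ref{tab:info1}. The main technical obstacle here will be establishing characteristicity of the Fadell--Neuwirth kernel; I would expect to prove this by computing the centre $Z(P_n(\Sigma_{g,p}))$ (trivial for these surfaces when $n\ge 2$, or generated by an explicit element) and the abelianisation, then identifying the kernel as the subgroup generated by all elements whose image in a suitable characteristic quotient vanishes.

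\smallskip

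\textbf{Family $\mathcal{F}_1$ (the spheres $\mathbb{S}^2$, $\mathbb{S}^2\setminus\{x_1\}$, $\mathbb{S}^2\setminus\{x_1,x_2\}$).} Here the base $\pi_1$ does \emph{not} have R$_\infty$, so a different strategy is needed and the statement is genuinely a set of equivalences, with the low-$n$ cases being exceptions. For $\mathbb{S}^2\setminus\{x_1\}$ we have $P_n(\Sigma_{0,1})\cong P(A_{n-1})$, the pure Artin-Tits group of type $A_{n-1}$, and for $\mathbb{S}^2\setminus\{x_1,x_2\}$ we have $P_n(\Sigma_{0,2})\cong P(B_n)$; for these the positive direction (R$_\infty$ holds for $n\ge 3$, resp.\ $n\ge 2$) is quoted directly from \cite[Theorem~1]{CS} / \cite{DGO}. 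For $\mathbb{S}^2$ itself, recall $P_n(\mathbb{S}^2)$ is finite for $n\le 3$ (so trivially fails R$_\infty$, since finite groups admit automorphisms of finite Reidemeister number — indeed $R(\mathrm{Id})<\infty$), which forces the threshold $n\ge 4$; for $n\ge 4$ I would use the Gillette--van Buskirk presentation together with the fact that $P_n(\mathbb{S}^2)$ modulo its (finite, order-two) centre is isomorphic to $P_{n-1}(\Sigma_{0,2})=P_{n-1}(\mathbb{S}^2\setminus\{x_1,x_2\})$, a group already known to have R$_\infty$ for $n-1\ge 3$, i.e.\ $n\ge 4$; then invoke that R$_\infty$ passes from a central quotient back to the group when the centre is finite and characteristic. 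The negative direction for the small values of $n$ (e.g.\ $P_1$, $P_2$, $P_3$ of the sphere, $P_1$, $P_2$ of the once-punctured sphere, $P_1$ of the twice-punctured sphere) is handled by exhibiting these groups explicitly — they are finite, infinite cyclic, free abelian of small rank, or similar — and noting each admits an automorphism (often the identity) with finite Reidemeister number.

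\smallskip

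\textbf{Assembling the if-and-only-if.} Finally I would knit the four enumerated cases together: (a) is the $\mathcal{F}_2$ argument above together with its $n=1$ boundary case; (b), (c), (d) are the three sphere cases, where the ``only if'' direction comes from the explicit low-$n$ computations and the ``if'' direction from the central-quotient / Artin-Tits reductions. The single point I expect to require the most care is the claim that the Fadell--Neuwirth kernel (in the $\mathcal{F}_2$ case) and the centre (in the sphere cases) are characteristic — everything else is either a citation or a finite-group triviality — so I would devote a preliminary lemma to each of these structural facts before proving the theorem proper.
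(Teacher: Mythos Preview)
Your proposal contains a genuine gap in the $\mathcal{F}_2$ case and a factual error in the sphere case.

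\textbf{The $\mathcal{F}_2$ gap.} The Fadell--Neuwirth kernel you propose to use is \emph{not} characteristic in $P_n(\Sigma_{g,p})$. The symmetric group $S_n$ acts on $P_n(\Sigma_{g,p})$ by (outer) automorphisms via conjugation inside $B_n(\Sigma_{g,p})$, and this action permutes the $n$ Fadell--Neuwirth projections $p_i\colon P_n(\Sigma_{g,p})\to \pi_1(\Sigma_{g,p})$ (one for each choice of strand to retain) and hence their kernels $K_i$, which are pairwise distinct normal subgroups. No centre or abelianisation description can single out one of them, so the approach you sketch cannot succeed. The paper sidesteps this by working instead with the \emph{Goldberg} short exact sequence
\[
1 \to N \to P_n(\Sigma_{g,p}) \to \textstyle\prod_{i=1}^n \pi_1(\Sigma_{g,p}) \to 1,
\]
whose kernel $N=\bigcap_i K_i$ is the normal closure of the disc pure braids and is symmetric in the strands. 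Even so, proving that $N$ is characteristic (\relem{gold1}) is nontrivial and uses An's theorem that every automorphism of $P_n(\Sigma_{g,p})$ is induced by a surface homeomorphism when $\chi(\Sigma_{g,p})<-1$; the surface $\Sigma_{0,3}$, where $\chi=-1$ and An's theorem is unavailable, is handled separately via $P_n(\Sigma_{0,3})\cong P_{n+2}(D)/Z(P_{n+2}(D))$ and \cite{DGO}. The quotient $\prod_i \pi_1(\Sigma_{g,p})$ then has R$_\infty$ by \cite[Corollary~4.5]{S}.

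\textbf{The sphere error.} The isomorphism you assert, $P_n(\mathbb{S}^2)/Z\cong P_{n-1}(\Sigma_{0,2})$, is false: the right-hand side has infinite cyclic centre (\repr{annulus}) while the left-hand side is centreless. The correct reduction (\rerems{sbg}(b)) is $P_n(\mathbb{S}^2)/Z(P_n(\mathbb{S}^2))\cong P_{n-3}(\Sigma_{0,3})$ for $n\ge 4$, which the paper feeds back into the $\mathcal{F}_2$ result for $\Sigma_{0,3}$. Apart from this, your $\mathcal{F}_1$ strategy --- quotient by the (characteristic) centre and reduce to a known case, together with the explicit low-$n$ computations and the citations of \cite{DGO} and \cite{CS} for $\Sigma_{0,1}$ and $\Sigma_{0,2}$ --- matches the paper's argument.
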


In order to prove the result for the whole group $B_n(\Sigma_{g,p})$, stated in the next theorem, we shall use \reth{purerinfty} and the following useful result: for all surfaces $S_{g,p}$ (orientable or not), $P_n(S_{g,p})$ is characteristic in $B_n(S_{g,p})$ with  one exception,  which is  when $S_{g,p}=\Sigma_{0,2}$ and $n=2$, see \cite[Theorem~1.5]{A}.

\begin{thm}\label{th:totalrinfty}  
Let $\Sigma_{g,p}$ be a surface which belongs to  $\mathcal{F}_1\cup \mathcal{F}_2$. 
The braid group $B_n(\Sigma_{g,p})$ has the R$_{\infty}$-property, if and only if $P_n(\Sigma_{g,p})$ has the  R$_{\infty}$-property.
\end{thm}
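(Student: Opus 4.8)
The plan is to exploit the fact, quoted from \cite[Theorem~1.5]{A}, that $P_n(\Sigma_{g,p})$ is characteristic in $B_n(\Sigma_{g,p})$ in all cases relevant to $\mathcal{F}_1\cup\mathcal{F}_2$ — the single exception $S_{g,p}=\Sigma_{0,2}$, $n=2$ does not belong to the ``if and only if'' range we must cover, since by \reth{purerinfty}(d) we only claim the R$_{\infty}$-property for $P_2(\Sigma_{0,2})$ (and we will need a small separate remark there). So, for every relevant surface, any automorphism $\phi\in\aut{B_n(\Sigma_{g,p})}$ restricts to an automorphism $\phi|_{P_n(\Sigma_{g,p})}\in\aut{P_n(\Sigma_{g,p})}$, and induces a quotient automorphism $\bar\phi\in\aut{S_n}$ making the diagram with the short exact sequence \reqref{ses} commute.

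First I would prove the ``only if'' direction, which is the easier half and in fact does not use the characteristic property: if $P_n(\Sigma_{g,p})$ does \emph{not} have the R$_{\infty}$-property then neither does $B_n(\Sigma_{g,p})$. Actually it is cleaner to argue the contrapositive of the statement we want in the form: if $B_n$ has R$_\infty$ then $P_n$ has R$_\infty$. Hmm — but the theorem as stated is an equivalence, so let me organise it as two implications. For ``$P_n$ has R$_\infty$ $\Rightarrow$ $B_n$ has R$_\infty$'': take any $\phi\in\aut{B_n(\Sigma_{g,p})}$; since $P_n$ is characteristic, $\phi$ restricts to $\psi:=\phi|_{P_n}\in\aut{P_n(\Sigma_{g,p})}$. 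I would then use the standard additivity of Reidemeister numbers along a short exact sequence of groups with the relevant automorphisms: if $R(\phi)<\infty$, then because $S_n$ is finite one gets $R(\psi)<\infty$ as well (the fibres of the map from twisted classes in $B_n$ to twisted classes in $S_n$ are, up to the $\bar\phi$-action, finitely many twisted classes of $P_n$; more precisely one uses that a finite group has only finitely many twisted conjugacy classes and the exact-sequence inequality $R(\psi)\le$ (number of $\phi$-twisted classes in $B_n$ lying over twisted classes of $S_n$ fixed set) $\cdot$ something — the clean statement is: in a short exact sequence $1\to N\to G\to Q\to 1$ invariant under $\phi$, if $R(\phi)<\infty$ and $Q$ is finite then $R(\phi|_N)<\infty$). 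This contradicts R$_\infty$ for $P_n$, so $R(\phi)=\infty$ for all $\phi$, i.e.\ $B_n$ has R$_\infty$.

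For the converse ``$B_n$ has R$_\infty$ $\Rightarrow$ $P_n$ has R$_\infty$'': combine \reth{purerinfty} and \reth{totalrinfty}'s own statement circularly — no; instead I would simply observe that the list of surfaces/values of $n$ for which \reth{purerinfty} asserts R$_\infty$ for $P_n$ is precisely a subset, and conversely whenever $P_n(\Sigma_{g,p})$ does \emph{not} have R$_\infty$ (e.g.\ the small sphere cases $\Sigma_{0,0}$ with $n\le 3$, $\Sigma_{0,1}$ with $n\le 2$) I must exhibit an automorphism of $B_n$ with finite Reidemeister number; these are finitely many explicit small groups ($B_2(\mathbb{S}^2),B_3(\mathbb{S}^2)$, $B_2(\Sigma_{0,1})=B_2$, etc.) and for each one I would invoke the known structure — several are finite or virtually cyclic or otherwise already understood in \cite{FG,DGO,CS} — so that the non-R$_\infty$-ness of $B_n$ follows from the literature or a direct computation. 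Equivalently, and more slickly, I would note that the correspondence $\phi\mapsto\phi|_{P_n}$ is surjective onto $\aut{P_n}$ up to the $B_n/P_n$-action is not needed; what is needed is just that for those small exceptional surfaces $B_n$ fails R$_\infty$, which is already recorded (Artin braid groups $B_2,B_3$ are not R$_\infty$, and $B_n(\mathbb{S}^2)$ for small $n$ is finite). The main obstacle will be handling the exceptional pair $\Sigma_{0,2}$, $n=2$ where $P_2(\Sigma_{0,2})$ is \emph{not} characteristic in $B_2(\Sigma_{0,2})$: there the restriction argument breaks, so I would treat it by hand using the explicit isomorphism $B_2(\Sigma_{0,2})\cong A(B_2)$ (an affine/spherical Artin–Tits group) from the Remark and the R$_\infty$ result \cite[Theorem~1]{CS}, checking directly that every automorphism of $B_2(\Sigma_{0,2})$ has infinite Reidemeister number, which is consistent with $P_2(\Sigma_{0,2})$ having R$_\infty$ and keeps the stated equivalence intact.
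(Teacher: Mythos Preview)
Your approach is essentially the paper's. Both directions proceed exactly as you outline: the forward implication uses that $P_n(\Sigma_{g,p})$ is characteristic of finite index in $B_n(\Sigma_{g,p})$ (the paper packages your finite-index transfer argument by citing \cite[Lemma~6]{MS}), and the converse is a direct check on the few small groups where $P_n$ fails R$_\infty$.

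Two remarks. For the exceptional pair $\Sigma_{0,2}$, $n=2$ (which \emph{does} lie in the range, contrary to your initial aside, since $P_2(\Sigma_{0,2})$ has R$_\infty$ by \reth{purerinfty}(d)), your route via $B_2(\Sigma_{0,2})\cong A(B_2)$ and \cite{CS} is valid; the paper instead uses the description $B_2(\Sigma_{0,2})\cong F_2\rtimes_\theta\mathbb{Z}$ from \cite{CrPa} together with \cite[Theorem~4.4]{FGW}. Second, a slip in your converse: the Artin group $B_3=B_3(D)$ \emph{does} have the R$_\infty$-property (\cite{FG}), but this is harmless since $P_3(D)$ already has R$_\infty$ by \reth{purerinfty}(c) and so that case never arises; the only $B_n$'s you actually need to show fail R$_\infty$ are $B_2(\mathbb{S}^2)$, $B_3(\mathbb{S}^2)$ (both finite) and $B_2(D)\cong\mathbb{Z}$, exactly as the paper records.
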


 The following table summarises the information obtained in this work as well the status of the question studied here for  a finite type surface orientable surface $\Sigma_{g,p}$.

\begin{table}[htb] 
\centering
\begin{tabular}{c c c c c c c}
\hline
     & &  $ $ & &  & & \vspace*{-0.3cm}\\
\textcolor{blue}{Family} & & \textcolor{blue}{$\pi_1$ has R$_{\infty}$} & & \textcolor{blue}{$P_n(\Sigma_{g,p})$ has R$_{\infty}$} & & \textcolor{blue}{$B_n(\Sigma_{g,p})$ has R$_{\infty}$}  \\[0.2cm] \hline \hline 
     & &  $ $ & &  & & \vspace*{-0.3cm}\\
$\mathcal{F}_1$ & & No & & Yes for most;  & & Yes for most;  \\ 
 & &  & & No for few cases  & & No for few cases  \\[0.2cm] 
\vspace{0.2cm}
$\mathcal{F}_2$ & & Yes & & Yes & & Yes \\
$\mathcal{F}_3$ & & No for $T^2$ & & Unknown & & Unknown \\ 
                & & Yes for $\Sigma_{1,1}$ & &  & & \vspace{0.2cm} \\   
\hline
\end{tabular}
\caption{The R$_{\infty}$-property for $P_n(\Sigma_{g,p})$ and $B_n(\Sigma_{g,p})$ after this paper.}
\label{tab:info2}
\end{table}

\begin{rem} In the table \ref{tab:info2}, for the family $\mathcal{F}_1$ the cases where $P_n(\Sigma_{g,p})$ ($n\geq 2$) does not have the R$_{\infty}$-property are precisely the cases $\mathbb{S}^2$ for $n=2,3$ and $S=D^2$ for $n=2$. The same holds for $B_n(\Sigma_{g,p})$.  
\end{rem}

This paper is organised as follows. 
In \resec{goldberg} we show that for any finite type surface $S_{g,p}$ (orientable or not) of genus $g\geq 0$ with $p\geq 0$ points removed, there is a short exact sequence
  \begin{equation}\label{eq:sesgintro}
	1\to  N\to P_n(S_{g,p})\to \Pi_{i=1}^n(\pi_1(S_{g,p})) \to 1
	\end{equation}
 where  $N$ is the normal closure of the Artin pure braid group in $P_n(S_{g,p})$. Then, in \resec{auto} we prove that the sequence \reqref{sesgintro} is characteristic for the surfaces of the family ${\mathcal F}_2$, being different from the sphere minus three points. 
In \resec{main} we prove \reth{purerinfty} and \reth{totalrinfty}, these are the main results of the paper about the R$_{\infty}$-property for orientable surface braid groups.

\subsection*{Acknowledgments}

The first author was supported by Methusalem grant METH/21/03 -- long term structural funding of the Flemish Government.
The second author was partially supported by the National Council for Scientific and Technological Development - CNPq through a \textit{Bolsa de Produtividade} 305223/2022-4  and by   Projeto Tem\'atico-FAPESP Topologia Alg\'ebrica, Geom\'etrica e Diferencial 2022/16455-6 (Brazil).
The third author was partially supported by the National Council for Scientific and Technological Development - CNPq through a \textit{Bolsa de Produtividade} 305422/2022-7.

\section{Goldberg's short exact sequence for pure braid groups over punctured surfaces (orientable or not)}\label{sec:goldberg}

	Let $S_{g,p}$ be a closed surface (orientable or not) of genus $g\geq 0$ with $p\geq 0$ punctures. Let 
    $D\subset S_{g,p}$ be a subset which is homeomorphic to the open disc of radius $1$ of the plane. Denote by 
     $D \stackrel{i}{\hookrightarrow} S_{g,p}$  the inclusion, and let $(z_1, \cdots, z_n)$ be a base point of $F_n(D)$ and of $F_n(S_{g,p})$.     This inclusion induces a morphism $i_\#\colon P_n(D) \to P_n(\Sigma_{g,p})$.
     Making use of such an embedding, 
in this section we prove that the pure braid groups over punctured surfaces fit into a short exact sequence of Goldberg's type \cite{G}, see Theorem~\ref{goldb}.
The proof is algebraic and in order to do that we shall use the presentations of $P_n(S_{g,p})$ given in \cite[Theorem~1]{La} for the case of the punctured sphere, in \cite[Theorem~5.1]{B1} for the punctured connected sum of tori and in \cite[Theorem~4.7]{D} for the case of the punctured connected sum of projective planes. 
In the first three subsections we discuss in more details these presentations,  where we also indicate how to identify $i_\#(P_n(D))$ in each of the surface braid groups, and in the last subsection we deal with the short exact sequence of Goldberg's type.

\subsection{The punctured sphere}\label{subsec:puncturedsphere}

The once-punctured sphere is homeomorphic to the (open) disc, see Figure~\ref{puncturedsphere} (a), and in this case we get the classical Artin pure braid groups \cite{A1}.

\begin{figure}[!htb]
    \centering
\begin{subfigure}[t]{0.5\textwidth}
        \centering
\includegraphics[scale=0.3]{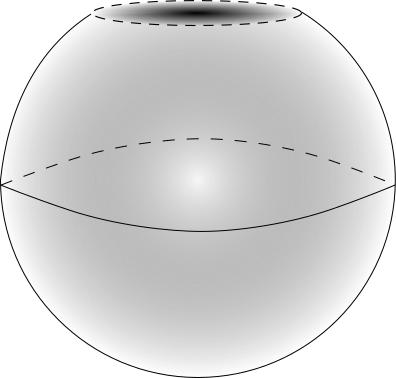}
\caption{Once-punctured.}
\label{puncturedspherea}
\end{subfigure}%
    ~ 
\begin{subfigure}[t]{0.5\textwidth}
        \centering
\includegraphics[scale=0.3]{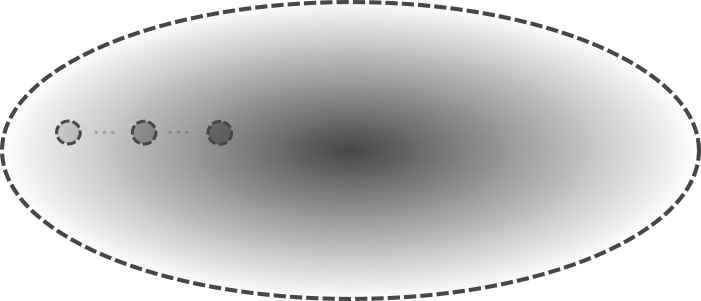}
\caption{With $p\geq 2$ punctures.}
\label{puncturedsphereb}
    \end{subfigure}%
\caption{The punctured sphere.}
\label{puncturedsphere}
\end{figure}

Let $p\geq 2$. 
Using the notation of \cite{La} and considering the $p$-punctured sphere as being the $(p-1)$-punctured disc (see Figure~\ref{puncturedsphere} (b)), one can easily see that $P_n(\Sigma_{0,p}) = P_{p-1,n}$ (so $m=p-1$).
Then \cite[Theorem~1]{La} provides a presentation of $P_n(\Sigma_{0,p}) = P_{p-1,n}$,  with a set of generators  
$$
\set{A_{i,j}}{1\leq i \leq p+n-2,\, p\leq j\leq p+n-1 \textrm{ and } i<j }
$$ 
subject to the following relations:
\begin{itemize}
\item[(P1)] $A_{i,j}^{-1}A_{r,s}A_{i,j}=A_{r,s}$ if ($i<j<r<s$) or ($r<i<j<s$).

\item[(P2)] $A_{i,j}^{-1}A_{j,s}A_{i,j}=A_{i,s}A_{j,s}A_{i,s}^{-1}$ if ($i<j<s$).

\item[(P3)] $A_{i,j}^{-1}A_{i,s}A_{i,j} = A_{i,s}A_{j,s}A_{i,s}A_{j,s}^{-1}A_{i,s}^{-1}$ if ($i<j<s$).

\item[(P4)] $A_{i,j}^{-1}A_{r,s}A_{i,j} = A_{i,s}A_{j,s}A_{i,s}^{-1}A_{j,s}^{-1}A_{r,s}A_{j,s}A_{i,s}A_{j,s}^{-1}A_{i,s}^{-1}$ if ($i<r<j<s$).
\end{itemize}

\begin{figure}[!htb]
\centering
{
\begin{picture}(300,140)

\put(0,0){\includegraphics[scale=0.6]{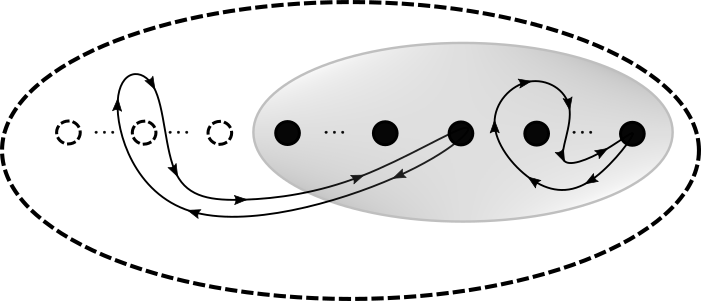}}
\put(29,86){{\scriptsize $1$}}
\put(65,86){{\scriptsize $i$}}
\put(90,86){{\scriptsize $p-1$}}
\put(128,86){{\scriptsize $p$}}
\put(207,86){{\scriptsize $j$}}
\put(235,86){{\scriptsize $j+1$}}
\put(270,86){{\scriptsize $p+n-1$}}
\put(81,27){{\small $A_{i,j}$}}
\put(212,43){{\small $A_{j+1,p+n-1}$}}
\end{picture}
}

\caption{Generator $A_{i,j}$ for $1\leq i \leq p+n-2$, $p\leq j\leq p+n-1$  and  $i<j$.
}
\label{aijpuncturedsphere}
\end{figure}

\begin{rem}
    In Figure~\ref{aijpuncturedsphere} we illustrate geometrically the generators $A_{i,j}$ of the group $P_n(\Sigma_{0,p})$, for $1\leq i \leq p+n-2$, $p\leq j\leq p+n-1$  and  $i<j$. 
We note that the set $\set{A_{i,j}}{p\leq i<j\leq p+n-1}$ corresponds to the set of Artin generators inside $P_n(\Sigma_{0,p})$.
\end{rem}

\subsection{The punctured connected sum of tori}\label{subsec:puncturedorientable}

Let $p\geq 1$ and $g\geq 1$. 
We shall use the presentation of $P_n(\Sigma_{g,p})$, the pure braid group of the $p$-punctured connected sum of $g$ tori $\Sigma_{g,p}$, as given in \cite[Theorem~5.1]{B1}. We note that this presentation had a few misprints which were corrected in \cite[Theorem~12]{BGG} and after private communication with P.\ Bellingeri (\cite{B2}) we fixed one more typo here.  A set of generators of $P_n(\Sigma_{g,p})$ given in \cite[Theorem~5.1]{B1} is 
$$
\set{A_{i,j}}{1\leq i\leq 2g+p+n-2,\, 2g+p\leq j\leq 2g+p+n-1,\, i<j}
$$ 
subject to the relations
\begin{itemize}
\item[(PR1)] $A_{i,j}^{-1}A_{r,s}A_{i,j}=A_{r,s}$ if ($i<j<r<s$) or ($r+1<i<j<s$), or ($i=r+1<j<s$ for even $r<2g$ or $r\geq 2g$).

\item[(PR2)] $A_{i,j}^{-1}A_{j,s}A_{i,j}=A_{i,s}A_{j,s}A_{i,s}^{-1}$ if ($i<j<s$).

\item[(PR3)] $A_{i,j}^{-1}A_{i,s}A_{i,j} = A_{i,s}A_{j,s}A_{i,s}A_{j,s}^{-1}A_{i,s}^{-1}$ if ($i<j<s$).

\item[(PR4)] $A_{i,j}^{-1}A_{r,s}A_{i,j} = A_{i,s}A_{j,s}A_{i,s}^{-1}A_{j,s}^{-1}A_{r,s}A_{j,s}A_{i,s}A_{j,s}^{-1}A_{i,s}^{-1}$ if ($i+1<r<j<s$) or \linebreak 
($i+1=r<j<s$ for odd $r<2g$ or $r>2g$).
\item[(ER1)] $A_{r+1,j}^{-1}A_{r,s}A_{r+1,j}=A_{r,s}A_{r+1,s}A_{j,s}^{-1}A_{r+1,s}^{-1}$ if $r$ odd, $r<2g$ and $r+1<j<s$.

\item[(ER2)] $A_{r-1,j}^{-1}A_{r,s}A_{r-1,j} = A_{r-1,s}A_{j,s}A_{r-1,s}^{-1}A_{r,s}A_{j,s}A_{r-1,s}A_{j,s}^{-1}A_{r-1,s}^{-1}$ if $r$ even,  $r\leq 2g$ and $r-1<j<s$. 

\end{itemize}

\begin{figure}[!htb]
\centering
\includegraphics[scale=0.19]{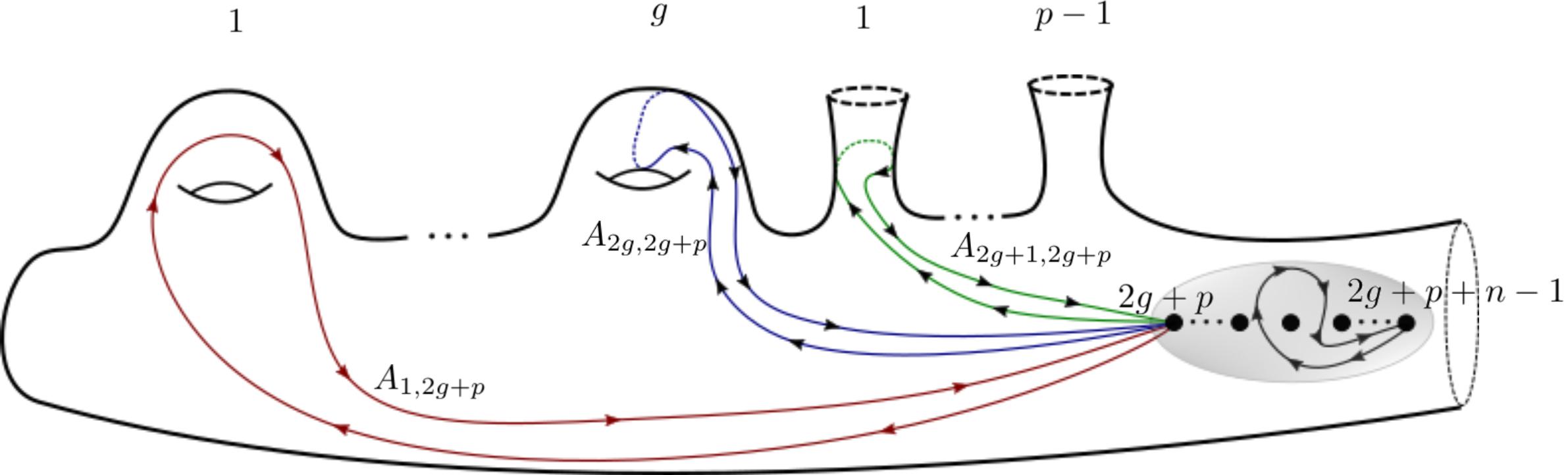}
\caption{Generator $A_{i,j}$ for $1\leq i\leq 2g+p+n-2$, $2g+p\leq j\leq 2g+p+n-1$ and  $i<j$.}
\label{aijorientable}
\end{figure}

\begin{rem}
    In Figure~\ref{aijorientable} we illustrate geometrically the generators $A_{i,j}$ of the group $P_n(\Sigma_{g,p})$, for $1\leq i\leq 2g+p+n-2$, $2g+p\leq j\leq 2g+p+n-1$  and  $i<j$. 
We note that the set $\set{A_{i,j}}{2g+p\leq i<j\leq 2g+p+n-1}$ corresponds to the set of Artin generators inside $P_n(\Sigma_{g,p})$.
\end{rem}

\subsection{The punctured non orientable surfaces}\label{subsec:puncturednonorientable}

For this case we use the presentation given in \cite[Theorem~4.7]{D}. Let $g,p\geq 1$.  A set of generators of $P_n(N_{g,p})$ is given by $A_{i,j}$ and $\rho_{r,k}$ for $1\leq i<j$, $p+1\leq j,\, r\leq p+n$ and $1\leq k\leq g$ subject to the relations

\begin{itemize}
\item[(a)] The ``Artin-type relations''
$$
A_{r,s}A_{i,j}A_{r,s}^{-1} = \left\{
\begin{array}{ll}
A_{i,j} & \mbox{if } (i<r<s<j) \textrm{ or } \\
 & \hfill  (r<s<i<j) \\
A_{s,j}^{-1}A_{i,j}A_{s,j} & \mbox{if } i=r<s<j \\
A_{i,j}^{-1}A_{r,j}^{-1}A_{i,j}A_{r,j}A_{i,j} & \mbox{if } r<i=s<j \\
A_{s,j}^{-1}A_{r,j}^{-1}A_{s,j}A_{r,j}A_{i,j}A_{r,j}^{-1}A_{s,j}^{-1}A_{r,j}A_{s,j} & \mbox{if } r<i<s<j. \\
\end{array} 
\right.
$$

\item[(b)] For every $p+1\leq i<j\leq p+n$ and $1\leq k,\, l\leq g$
$$
\rho_{i,k}\rho_{j,l}\rho_{i,k}^{-1} = \left\{
\begin{array}{ll}
\rho_{j,l} & \mbox{if } k<l \\
\rho_{j,k}^{-1}A_{i,j}^{-1}\rho_{j,k}^{2} & \mbox{if } k=l \\
\rho_{j,k}^{-1}A_{i,j}^{-1}\rho_{j,k}A_{i,j}^{-1}\rho_{j,l}A_{i,j}\rho_{j,k}^{-1}A_{i,j}\rho_{j,k} & \mbox{if } k>l. \\
\end{array} 
\right.
$$

\item[(c)] The surface relation, for every $p+1\leq j\leq p+n$
$$
\prod_{l=1}^{g}\rho_{j,l}^{2}=\left( \prod_{i=1}^{j-1}A_{i,j} \right)\left( \prod_{s=1+j}^{p+n}A_{j,s} \right).
$$
Note that when $j=p+n$ then in the right-hand side of the equality the second factor disappears. 

\item[(d)] For every $1\leq i<j$, $p+1\leq j,\, k\leq p+n$, $k\neq j$ and $1\leq l\leq g$
$$
\rho_{k,l}A_{i,j}\rho_{k,l}^{-1} = \left\{
\begin{array}{ll}
A_{i,j} & \mbox{if } k<i \textrm{ or } j<k \\
\rho_{j,l}^{-1}A_{i,j}^{-1}\rho_{j,l} & \mbox{if } k=i \\
\rho_{j,l}^{-1}A_{k,j}^{-1}\rho_{j,l}A_{k,j}^{-1}A_{i,j}A_{k,j}\rho_{j,l}^{-1}A_{k,j}\rho_{j,l} & \mbox{if } i<k<j. \\
\end{array} 
\right.
$$

\end{itemize}

\begin{figure}[!htb]
\centering
\includegraphics[scale=0.15]{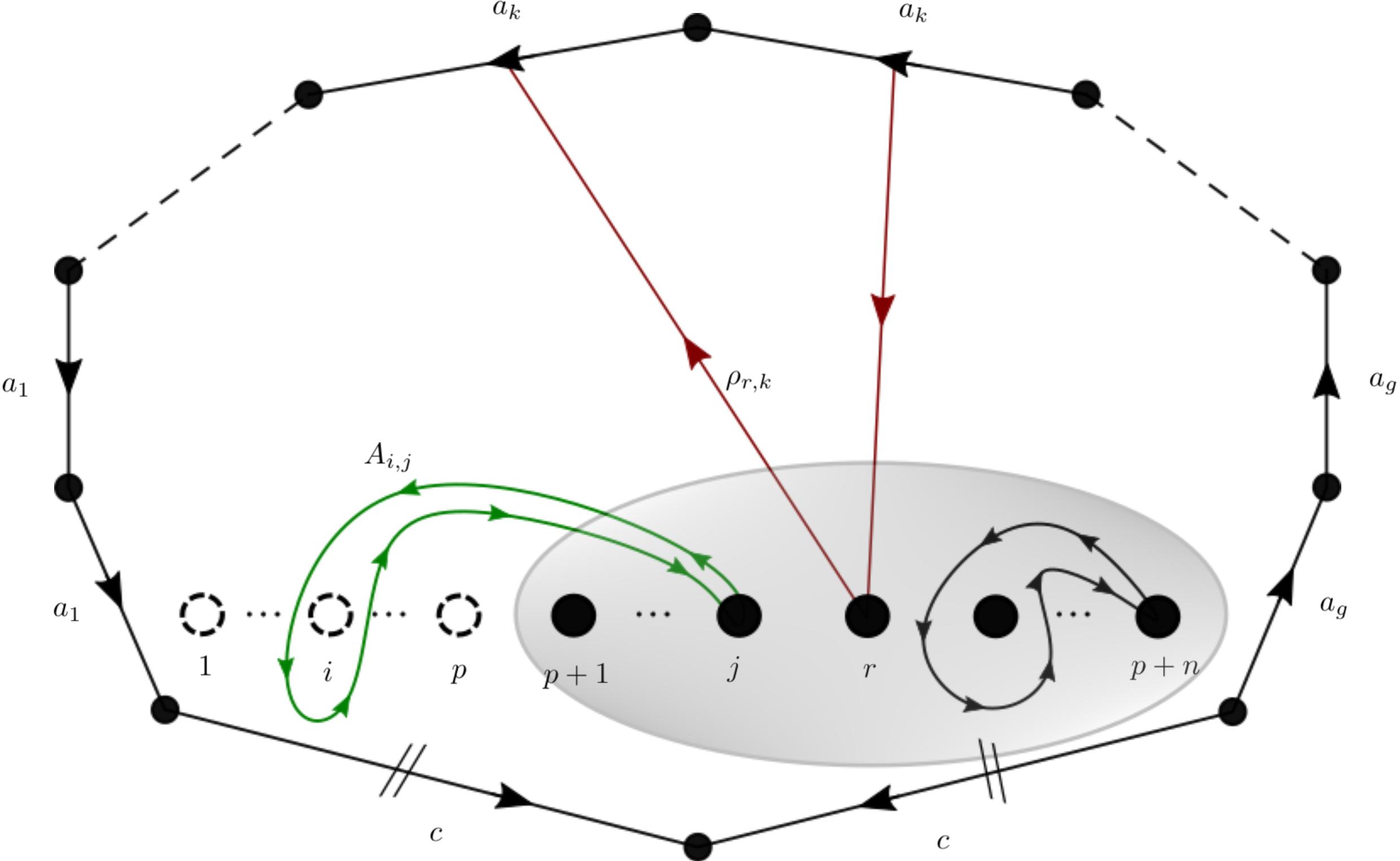}
\caption{Generators $A_{i,j}$ and $\rho_{r,k}$ for $1\leq i<j$, $p+1\leq j,\, r\leq p+n$ and $1\leq k\leq g$.}
\label{aijnonorientable}
\end{figure}

\begin{rem}
    In Figure~\ref{aijnonorientable} we illustrate geometrically the generators $A_{i,j}$ and $\rho_{r,k}$ of the group $P_n(N_{g,p})$, for $1\leq i<j$, $p+1\leq j,\, r\leq p+n$ and $1\leq k\leq g$. 
We note that the set $\set{A_{i,j}}{p+1\leq i<j\leq p+n}$ corresponds to the set of Artin generators inside $P_n(N_{g,p})$.
\end{rem}

\subsection{Goldberg's short exact sequence for surface pure braid groups}

The short exact sequence of the following theorem is known for closed surfaces, see \cite{G} for $S_{g,p}\ne \mathbb{S}^2, \mathbb{R}P^2$ and \cite{GG3} for $S=\mathbb{R}P^2$. We extend it here to the case of punctured surfaces.

	\begin{thm}\label{goldb} 
	Let $S_{g,p}$ be a closed surface (orientable or not) of genus $g\geq 0$ with $p\geq 0$ points removed. Let $N$ denote the normal subgroup  of $P_n(S_{g,p})$ generated by the image of the Artin pure braids via the inclusion of the disc $D$ into $S_{g,p}$, $i\colon D \hookrightarrow S_{g,p}$. 
	Then 
  \begin{equation}\label{eq:sesg}
	1\to  N\to P_n(S_{g,p})\to \Pi_{i=1}^n(\pi_1(S_{g,p})) \to 1
	\end{equation}
	is a short exact sequence.
  \end{thm}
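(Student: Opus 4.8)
The plan is to exhibit the homomorphism in \reqref{sesg} geometrically and then prove exactness, quoting \cite{G,GG3} for the closed surfaces (the case $p=0$) and treating the punctured surfaces ($p\geq 1$) by an algebraic computation based on the presentations recalled in \resubsec{puncturedsphere}--\resubsec{puncturednonorientable}. Let $\theta\colon P_n(S_{g,p})\to \prod_{i=1}^n\pi_1(S_{g,p})$ be the homomorphism induced on fundamental groups by the inclusion $F_n(S_{g,p})\hookrightarrow (S_{g,p})^n$, using the base point $(z_1,\dots,z_n)$ and the canonical identification $\pi_1\big((S_{g,p})^n\big)\cong\prod_{i=1}^n\pi_1(S_{g,p})$; equivalently, $\theta$ is the product of the $n$ maps $P_n(S_{g,p})\to P_1(S_{g,p})=\pi_1(S_{g,p})$ that forget all strands but the $k$-th one, so $\theta(\beta)$ records the homotopy classes of the loops traced by the individual strands of $\beta$ in $S_{g,p}$. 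On the generators of the presentations of \resec{goldberg} this means: $\theta$ kills every generator $A_{i,j}$ whose two indices both lie in the strand range (these are precisely the Artin generators inside $P_n(S_{g,p})$, by the remarks in \resubsec{puncturedsphere}--\resubsec{puncturednonorientable}), and sends each generator recording that a single strand $k$ runs along a handle curve or a puncture curve of $S_{g,p}$ (the remaining $A_{i,j}$, together with the $\rho_{r,k}$ in the non-orientable case) to the corresponding standard generator of the $k$-th copy of $\pi_1(S_{g,p})$. For $p=0$ there is nothing to do: \reqref{sesg} is \cite{G} when $S_{g,p}\neq\mathbb{S}^2,\mathbb{R}P^2$, it is \cite{GG3} when $S_{g,p}=\mathbb{R}P^2$, and when $\pi_1(S_{g,p})$ is trivial (i.e.\ $S_{g,p}=\mathbb{S}^2$ or $S_{g,p}=D^2=\mathbb{S}^2\setminus\{x_1\}$) the statement just says $N=P_n(S_{g,p})$, which holds because these groups are generated by the images of the Artin pure braids.

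The two easy directions come first. That $\theta$ is well defined is checked relation by relation and is immediate once one observes that in $\prod_{i=1}^n\pi_1(S_{g,p})$ the images coming from distinct strands commute. Surjectivity is clear: for each $k$, the standard generators of the $k$-th factor are the $\theta$-images of the generators of $P_n(S_{g,p})$ moving only the $k$-th strand, so $\operatorname{Im}\theta$ contains a generating set. For $N\subseteq\ker\theta$, note that the composite $P_n(D)=\pi_1(F_n(D))\xrightarrow{i_\#}P_n(S_{g,p})\xrightarrow{\theta}\pi_1\big((S_{g,p})^n\big)$ factors through $\pi_1(D^n)=1$, since $i\colon D\hookrightarrow S_{g,p}$ is the inclusion of a disc and $F_n(D)\hookrightarrow (S_{g,p})^n$ factors through $D^n$; hence $i_\#(P_n(D))\subseteq\ker\theta$, and as $\ker\theta$ is normal, its normal closure $N$ lies in $\ker\theta$.

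The substantial part is the reverse inclusion $\ker\theta\subseteq N$, which I would prove in the form ``$\theta$ induces an isomorphism $P_n(S_{g,p})/N\xrightarrow{\ \sim\ }\prod_{i=1}^n\pi_1(S_{g,p})$''. Starting from the presentation of $P_n(S_{g,p})$ appropriate to the family of $S_{g,p}$ (\cite[Theorem~1]{La} for the punctured sphere, \cite[Theorem~5.1]{B1} as corrected in \cite[Theorem~12]{BGG} and \cite{B2} for the punctured connected sum of tori, \cite[Theorem~4.7]{D} for the punctured non-orientable surfaces), one adjoins the relations $A_{i,j}=1$ for every strand-strand pair $(i,j)$; this presents $P_n(S_{g,p})/N$, and the claim is that it simplifies to a presentation of $\prod_{i=1}^n\pi_1(S_{g,p})$. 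Concretely, one has to verify three things: the surviving generators split into $n$ disjoint packets, the $k$-th packet being the images of the handle/puncture generators of strand $k$ and having the same size as a minimal generating set of $\pi_1(S_{g,p})$; the only relation internal to the $k$-th packet is the surface relation (the relation (c) in the non-orientable case, and no internal relation at all for the punctured sphere or the punctured connected sum of tori, where $\pi_1(S_{g,p})$ is free), which after substituting the killed generators becomes exactly the single defining relation of the $k$-th copy of $\pi_1(S_{g,p})$; and every other defining relation ((P1)--(P4); (PR1)--(PR4) together with (ER1)--(ER2); or (a), (b), (d)) reduces modulo $N$ either to a triviality or to a commutation relation between a generator of packet $k$ and a generator of packet $k'$ with $k\neq k'$. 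Granting this, the resulting presentation is literally one of $\prod_{i=1}^n\pi_1(S_{g,p})$, so $\theta$ descends to an isomorphism and $\ker\theta=N$, completing the proof.

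The real obstacle is exactly this last verification: it is a case-by-case chase through each relation of each of the three presentations, substituting the trivialised strand-strand generators and confirming that what remains is precisely the inter-strand commutation relations and the intra-strand surface relation and nothing more, all while matching the surviving generators with the standard generators of $\pi_1(S_{g,p})$ used to define $\theta$. The individual computations are elementary, but they require care because the three presentations use different index conventions for distinguishing handles, punctures and strands, and because in the non-orientable case one must also use relation (c) to eliminate one generator per packet before recognising the free (when $p\geq 2$) or surface-group structure of each factor.
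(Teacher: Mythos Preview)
Your proposal is correct and follows essentially the same approach as the paper: cite \cite{G,GG3} for the closed cases, dispose of $\mathbb{S}^2$ and $D^2$ trivially, and for punctured surfaces compute a presentation of $P_n(S_{g,p})/N$ by adjoining the relations $A_{i,j}=1$ for the strand--strand generators and checking case by case that what survives is exactly the direct product $\prod_{i=1}^n\pi_1(S_{g,p})$. The paper carries out precisely this relation-by-relation verification in the three families (labelling the resulting relations (QR), (QR1)--(QR3), and (QR1)--(QR3)+(QSR) respectively); your only addition is the explicit geometric description of the surjection $\theta$ via $F_n(S_{g,p})\hookrightarrow (S_{g,p})^n$, which the paper leaves implicit.
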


\begin{proof}

For $\Sigma_{0,0}=\mathbb{S}^2$ this is obvious since $P_n(\St[2])$ is generated by the images of the pure Artin braids.  For $S_{g,0}\ne \mathbb{S}^2, \mathbb{R}P^2$ this was proved by Goldberg \cite{G} (and this was in the orientable case a  conjecture of Birman \cite{Bi}). For the case $N_{1,0}=\mathbb{R}P^2$ see Gon\c calves and Guaschi \cite{GG3}.

For the remaining cases (of punctured surfaces) the proof is algebraic by considering the presentations for the given groups given in the subsections above. Let $p\geq 1$.

\begin{itemize}
\item[Case 1.] First we prove the result for the punctured sphere. 
We consider in this case $p\geq 2$ since the pure braid group of the 1-punctured sphere $P_n(\Sigma_{0,1})$ is exactly  the pure braid group of the disc $P_n(D)$ and then the result is trivial. 

Let $p\geq 2$. We shall use the presentation of $P_n(\Sigma_{0,p})$ given in \resubsec{puncturedsphere}. 
We note that, in this presentation of $P_n(\Sigma_{0,p})$, the generators coming from the Artin generators of $P_n(D)$, via the inclusion $i\colon D \hookrightarrow \Sigma_{0,p}$, are the elements $A_{i,j}$ for $p\leq i<j\leq p+n-1$. 
Now, we will describe a presentation of the quotient of $P_n(\Sigma_{0,p})$ by the normal closure  subgroup generated by the Artin pure braids $A_{i,j}$, with $p\leq i<j\leq p+n-1$, that we called $N$.

Let us add the relations $A_{i,j}=1$ for $p\leq i< j\leq p+n-1$ to the presentation of $P_n(\Sigma_{0,p})$ given in \resubsec{puncturedsphere}. This implies that
for the quotient group $P_n(\Sigma_{0,p})/N$ we can take  $\set{A_{i,j}}{ 1\leq i\leq p-1 \ , \ p\leq j\leq p+n-1 }$ as the set of generators. 
The relations (P2) become trivial in the quotient and the relations (P1), (P3) and (P4) lead to the following 
relations respectively
\begin{itemize}
\item[(QR1)] $A_{i,j}^{-1}A_{r,s}A_{i,j}=A_{r,s}$ if  
($r<i<j<s$).

\item[(QR2)] $A_{i,j}^{-1}A_{i,s}A_{i,j} = A_{i,s}$ if ($i<j<s$).

\item[(QR3)] $A_{i,j}^{-1}A_{r,s}A_{i,j} = 
A_{r,s}
$ if ($i<r<j<s$).
\end{itemize}   

We can actually collect the relations (QR1), (QR2) and (QR3) together to obtain the relations
\begin{itemize}
\item[(QR)] $ [A_{i,j}, A_{r,s}]=1$ if ($1\leq r,i\leq p-1$) and ($p\leq j < s \leq p+n-1$).
\end{itemize}
This shows that for every $\ell\in \{ p,\ldots,p+n-1 \}$ the group $T_{\ell}$ generated by the set $\set{A_{k,\ell}}{1\leq k\leq p-1}$ is a free group of rank $p-1$ and
$$
P_n(\Sigma_{0,p})/N \cong T_1\oplus T_2\oplus \cdots \oplus T_n \cong \left( \pi_1(\Sigma_{0,p}) \right)^n.
$$
This concludes the proof for the punctured sphere.

\item[Case 2.] Now we prove this result for the case of the $p$-punctured connected sum of $g$ tori $\Sigma_{g,p}$. We shall use the presentation of the pure braid group $P_n(\Sigma_{g,p})$ given in \resubsec{puncturedorientable}.  
We note that the generators coming from the Artin generators of $P_n(D)$, via the inclusion $i\colon D \hookrightarrow \Sigma_{g,p}$, are the elements $A_{i,j}$ for $2g+p\leq i<j\leq 2g+p+n-1$. 
Now we add the relations $A_{i,j}=1$ for $2g+p\leq i<j\leq 2g+p+n-1$ to the presentation given in \resubsec{puncturedorientable} to deduce a presentation of the quotient group $P_n(\Sigma_{g,p})/N$.
We claim that the set $\set{A_{i,j}}{1\leq i\leq 2g+p-1,\, 2g+p\leq j\leq 2g+p+n-1}$ constitutes a set of generators for the group $P_n(\Sigma_{g,p})/N$ which are subject to the relations $[A_{i,j}, A_{r,s}]=1$ for $1\leq i,\, r\leq 2g+p-1$ and $2g+p\leq j,\, s\leq 2g+p+n-1$ with $j\neq s$. 
The claim is an immediate consequence of the following:
\begin{itemize}
	\item[(QR1)]  $[A_{i,j}, A_{r,s}]= 1$ if ($r+1<i<j<s$) or ($i=r+1<j<s$ for any $1\leq r\leq 2g+p-2$).
 \newline
 This follows directly from (PR1) when $r+1<i<j<s$ and from (PR1) in case $i=r+1<j<s$ for even $r<2g$ or $r\geq 2g$ and from (ER1) when $i=r+1<j<s$ for odd $r<2g$.

	\item[(QR2)] $[A_{i,j}, A_{i,s}]= 1$ if $i<j<s$.
	
	In fact, since $A_{j,s}=1$ for $2g+p\leq j<s \leq 2g+p+n-1$ then
	from (PR3):
	$$
	A_{i,j}^{-1}A_{i,s}A_{i,j}=A_{i,s}A_{j,s}A_{i,s}A_{j,s}^{-1}A_{i,s}^{-1}
	$$
	we obtain the relation (QR2).
	
	\item[(QR3)] $[A_{i,j}, A_{r,s}]= 1$ if ($i+1<r<j<s$) or ($i+1=r<j<s$ for any $2\leq r\leq 2g+p-2$).
	\newline
	Using once more time the fact that $A_{j,s}=1$ for $2g+p\leq j<s \leq 2g+p+n-1$ and from (PR4):
	$$
	A_{i,j}^{-1}A_{r,s}A_{i,j}=	A_{i,s}A_{j,s}A_{i,s}^{-1}A_{j,s}^{-1}A_{r,s}A_{j,s}A_{i,s} A_{j,s}^{-1}A_{i,s}^{-1}
	$$
	we get the relation (QR3) when ($i+1<r<j<s$) or ($i+1=r<j<s$ for odd $r<2g$ or $r> 2g$). When $r\leq 2g$ is even the verification is similar using (ER2).

\end{itemize}
Note that all of the relations (PR2) became trivial in the quotient group $P_n(\Sigma_{g,p})/N$. 

Hence the quotient group $P_n(\Sigma_{g,p})/N$ is isomorphic to  $\Pi_{i=1}^n(\pi_1(\Sigma_{g,p}))$, the direct product of free groups of rank $2g+p-1$. 

\item[Case 3.] Finally we consider the case of the punctured connected sum of projective planes.
For this case we use the presentation given in \resubsec{puncturednonorientable}. 
The generators coming from the Artin generators of $P_n(D)$, via the inclusion $i\colon D \hookrightarrow N_{g,p}$, are the elements $A_{i,j}$ for $p+1\leq i<j\leq p+n$.  
We add the relations $A_{i,j}=1$ for $p+1\leq i<j\leq p+n$ to the presentation given in \resubsec{puncturednonorientable} to deduce the following presentation of the quotient group $P_n(N_{g,p})/N$. 

The set of generators is given by $A_{i,j}$ for $1\leq i\leq p$ and $p+1\leq j\leq p+n$ and $\rho_{r,k}$ for $p+1\leq r\leq p+n$ and $1\leq k\leq g$ subject to the relations
\begin{itemize}
\item[(QR1)] $[A_{i,j}, A_{r,s}]=1$ for ($i<r<s<j$) or ($i=r<s<j$) or ($r<i<s<j$).

This follows from item (a).

\item[(QR2)] $[\rho_{r,k}, \rho_{j,l}]=1$ for $p+1\leq r<j\leq p+n$ and $1\leq k,l\leq g$.

This follows from item (b). 

\item[(QR3)] $[\rho_{k,l}, A_{i,j}]=1$ for $j\neq k$.

This relation follows from (d).

\item[(QSR)] From item (c) we have that for every $p+1\leq j\leq p+n$
$$
\prod_{l=1}^{g}\rho_{j,l}^{2}= \prod_{i=1}^{p}A_{i,j}.
$$
\end{itemize}

Recall that a presentation for the fundamental group of the punctured connected sum of projective planes, $\pi_1(N_{g,p})$, is given by 
$$
\setang{A_{i}, \rho_{l} \textrm{ for } 1\leq i\leq p \textrm{ and } 1\leq l\leq g}{\prod_{l=1}^{g}\rho_{l}^{2}= \prod_{i=1}^{p}A_{i}}.
$$
Therefore, the quotient group $P_n(N_{g,p})/N$ is isomorphic to the direct product of $n$ copies of $\pi_1(N_{g,p})$ (one for each $j\in \{p+1, \ldots, p+n\}$).

\end{itemize}

\end{proof}

\section{Automorphisms of $P_n(S)$ for $S$ a surface of finite type}\label{sec:auto}

Let $\Sigma_{g,p}$ be  an arbitrary finite surface of genus $g$ with $p\geq 0$ points removed. Let $M^{\ast}_n(\Sigma_{g,p})$ denote the \emph{extended mapping class group}, defined to be the group of isotopy classes of (possibly orientation-reversing) homeomorphisms of 
$(\Sigma_{g,p},z)$,  where  $z=\{ z_1,\ldots,z_n \}$ is a set of $n$ distinct points in $\Sigma_{g,p}$ and $(z_1,\ldots,z_n)$ is  a base point in the configuration space $F_n(\Sigma_{g,p})$.
Let  $h\colon \Sigma_{g,p} \to \Sigma_{g,p}$ be a homeomorphism which leaves $z$ invariant,  i.e.  $h(z)=z$, so the isotopy class of $h$, denoted by  $[h]$ is an element of $M^{\ast}_n(\Sigma_{g,p})$. The
 homeomorphism $h$ defines a permutation  $\sigma_{h}\in S_n$ given by the following equation:
$$h(z_i)=z_{\sigma_{h}(i)}.$$
We will consider the morphisms $\Phi\colon M^{\ast}_n(\Sigma_{g,p}) \to  \aut{B_n(\Sigma_{g,p})}$,  $\Phi_0\colon M^{\ast}_n(\Sigma_{g,p}) \to  \aut{P_n(\Sigma_{g,p})}$ defined  as follows. For  the case of  $\Phi$, 
given $h\in [h]\in M^{\ast}_n(\Sigma_{g,p})$  and $\{\alpha_1,\cdots,\alpha_n\}$  a set of paths between elements of $z$ which is  a representative of an element of $B_n(\Sigma_{g,p})$  define  $h_{n\#}[\{\alpha_1,\cdots,\alpha_n\}]$ as the class determined by the set of paths
 $\{h\circ \alpha_1,\cdots,h\circ \alpha_n\}$. It is straightforward  to see that this map is well defined, it is a homomorphism, and $\Phi$ is also a homomorphism. For  the case of $\Phi_0$  we have a similar definition, where we only stress the point that needs to be adapted in  the description above.  Given $h$ and a representative 
 $(\alpha_1,\cdots,\alpha_n)$  of an element  of $P_n(\Sigma_{g,p}, (z_1,\cdots,z_n))$ 
 define  $h_{n\#}[(\alpha_1,\cdots,\alpha_n)]$ as the class determined by the ordered  sequence of loops 
 $(h\circ \alpha_{\sigma_h^{-1}(1)},\cdots,h\circ \alpha_{\sigma_h^{-1}(n)})$ having base point  $(z_1,\cdots, z_n)$. 
 The rest is similar.\\
 Let $\Sigma_{g,p}$ be an orientable surface of genus $g$ with $p\geq 0$ points removed. 
The group $\aut{P_n(\Sigma_{g,p})}$ has been  studied by many authors and it was proved that there exists an isomorphism  
\begin{equation}\label{autpn} 
M^{\ast}_n(\Sigma_{g,p}) \simeq \aut{P_n(\Sigma_{g,p})}
\end{equation} 
for (not necessarily closed) orientable surfaces   with Euler characteristic $\chi(\Sigma_{g,p})<-1$ 
by  \cite[Theorem~1.3]{A}. 
The isomorphism \eqref{autpn} is the map $\Phi_0$ as described above, 
for more details see 
\cite{A}. \\
So the following cases, for  orientable surfaces, are  not  covered by the results above:
\begin{itemize}
    \item[I)] for $g=1$ and $p=0,1$ (i.e.\ the surfaces of $\mathcal{F}_3$); 
    \item[II)] $g=0$,      $p=0, 1, 2, 3$. (i.e.\ the surfaces of $\mathcal{F}_1$ and the surface $\Sigma_{0,3}$).
\end{itemize}

Now we prove that the short exact sequence of Theorem~\ref{goldb} is characteristic with respect to automorphisms. 
This will be used to study the R$_{\infty}$-property of the groups $P_n(\Sigma_{g,p})$ and $B_n(\Sigma_{g,p})$ in the next section.

\begin{lem}\label{lem:gold1} 
Let $\Sigma_{g,p}$ be a surface of the family ${\mathcal F}_2$, being different from the sphere minus three points.
 Let $N$ denote the normal subgroup generated by the image of the Artin pure braids via the inclusion of the disc $D$ into $\Sigma_{g,p}$, $i\colon D \hookrightarrow \Sigma_{g,p}$, and $n\geq 2$. 
If $\varphi\colon P_n(\Sigma_{g,p}) \to P_n(\Sigma_{g,p})$ is an automorphism then $\varphi(N)\subset N$. Therefore the short exact sequence given in \req{sesg} is characteristic. 
\end{lem}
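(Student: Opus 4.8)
The plan is to identify $N$ intrinsically, i.e. group-theoretically, so that any automorphism must preserve it. The key is the isomorphism \eqref{autpn}: since $\Sigma_{g,p}$ lies in $\mathcal{F}_2$ and is not the thrice-punctured sphere, it has $\chi(\Sigma_{g,p})<-1$, so every automorphism $\varphi$ of $P_n(\Sigma_{g,p})$ is geometric, namely $\varphi=\Phi_0([h])$ for some $[h]\in M^{\ast}_n(\Sigma_{g,p})$. Thus it suffices to show that the subgroup $N$ is invariant under $h_{n\#}$ for every homeomorphism $h$ of $(\Sigma_{g,p},z)$ that permutes the marked points $z=\{z_1,\dots,z_n\}$.

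First I would recall, from Theorem~\ref{goldb}, that $N$ is the kernel of the natural surjection $P_n(\Sigma_{g,p})\to \prod_{i=1}^n\pi_1(\Sigma_{g,p})$, which is induced by the $n$ coordinate "forgetful" maps $F_n(\Sigma_{g,p})\to \Sigma_{g,p}$, $(x_1,\dots,x_n)\mapsto x_i$ (each composed with filling in the other $n-1$ punctures). Concretely, $N$ consists of the pure braids that become trivial after capping off all but one strand at a time; equivalently, $N$ is generated over $P_n(\Sigma_{g,p})$ by the image of $P_n(D)$ under $i_\#$, since capping all strands but one kills precisely the disc braids. The point is that this description is natural with respect to homeomorphisms: a homeomorphism $h$ of $(\Sigma_{g,p},z)$ induces a self-homeomorphism of $F_n(\Sigma_{g,p})$ (permuting coordinates by $\sigma_h$), and it intertwines the $i$-th coordinate projection with the $\sigma_h(i)$-th one, up to the homeomorphism $h\colon\Sigma_{g,p}\to\Sigma_{g,p}$ on the target. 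Hence $h_{n\#}$ sends $\ker\bigl(P_n\to\prod\pi_1\bigr)$ into itself, because an element dies under all $n$ projections if and only if its image under $h_{n\#}$ does. This gives $\varphi(N)\subseteq N$, and applying the same argument to $\varphi^{-1}$ yields $\varphi(N)=N$, so the sequence \eqref{eq:sesg} is characteristic.

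An alternative, more algebraic route — useful if one wants to avoid invoking the full strength of \cite[Theorem~1.3]{A} and instead only its weaker forms, or to double-check — is to characterise $N$ via a quotient that is manifestly automorphism-invariant: $P_n(\Sigma_{g,p})/N\cong\prod_{i=1}^n\pi_1(\Sigma_{g,p})$ is a product of $n$ free groups of rank $\geq 2$ (here one uses $g\geq 1$ or $p\geq 3$, which holds throughout $\mathcal{F}_2$), and one argues that $N$ is the intersection of all normal subgroups with quotient of this type, or that $\prod_{i=1}^n\pi_1(\Sigma_{g,p})$ is the maximal such quotient; but pinning this down cleanly is delicate, so I would fall back on the geometric argument above as the main line.

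The main obstacle I anticipate is the bookkeeping in the geometric argument, specifically verifying carefully that $h_{n\#}$ really does intertwine the coordinate projections as claimed, taking proper account of the permutation $\sigma_h$ and of the convention (spelled out in Section~\ref{sec:auto}) by which $\Phi_0([h])$ reindexes the loops $(\alpha_1,\dots,\alpha_n)\mapsto(h\circ\alpha_{\sigma_h^{-1}(1)},\dots,h\circ\alpha_{\sigma_h^{-1}(n)})$. One must check that under this reindexing the composite $P_n(\Sigma_{g,p})\xrightarrow{h_{n\#}}P_n(\Sigma_{g,p})\to\pi_1(\Sigma_{g,p})$ (the $i$-th factor) equals $h_\#$ composed with the $\sigma_h^{-1}(i)$-th factor projection, so that membership in $N=\bigcap_i\ker(\text{$i$-th projection})$ is preserved. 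This is routine but must be done with care; everything else is formal.
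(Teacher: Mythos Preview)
Your proposal is correct and follows essentially the same approach as the paper: both use An's result that every automorphism is geometric (since $\chi(\Sigma_{g,p})<-1$), and both exploit the compatibility of the induced automorphism $h_{n\#}$ with the projection $P_n(\Sigma_{g,p})\to\prod_{i=1}^n\pi_1(\Sigma_{g,p})$ to conclude that $N=\ker(P_n\to\prod\pi_1)$ is preserved. The only cosmetic difference is that the paper verifies this on the generators $i_\#(P_n(D))$ directly---observing that each component loop $h\circ i\circ\alpha_{\sigma_h^{-1}(j)}$ lands in the disc $h(D)$ and is therefore nullhomotopic in $\Sigma_{g,p}$---whereas you phrase the same computation as the intertwining of the coordinate projections (up to $\sigma_h$ and $h_\#$); these are two ways of saying the same thing.
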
  

\begin{proof} 
Let $\varphi$ be an automorphism of $P_n(\Sigma_{g,p})$. 
To show that $\varphi(N)\subset N$ it is enough to prove that $\varphi(i_\#(P_n(D)))\subset N$, since $N$ is the normal subgroup generated by the images of the Artin pure braids via the inclusion $i\colon D \hookrightarrow \Sigma_{g,p}$ of the disc $D$ into $\Sigma_{g,p}$.
From the hypothesis of this lemma the Euler characteristic $\chi(\Sigma_{g,p})<-1$, and so there is a homeomorphism $h\colon \Sigma_{g,p}\to \Sigma_{g,p}$ such that $\varphi$ is induced by $h$, i.e.\ $\varphi=\Phi_0([h])\in \aut{P_n(\Sigma_{g,p})}$ (by \eqref{autpn} and the discussion above).

Let $\alpha\in P_n(D)$. We shall prove that $\varphi(i_\#(\alpha))\in N$. The inclusion $i\colon  D \hookrightarrow \Sigma_{g,p}$ induces an inclusion $\widehat{i}\colon  F_n(D) \hookrightarrow F_n(\Sigma_{g,p})$. 
Geometrically, since $P_n(D)=\pi_1(F_n(D),(z_1,\ldots,z_n))$, we have that $\alpha$ has a representative $\widehat{\alpha}=(\alpha_1,\alpha_2,\ldots,\alpha_n)$ where $\alpha_j\colon [0,1]\to D$ is a  loop in $D$ with base point $z_j\in D$. 
By definition, $\Phi_0([h])(i_\#(\alpha))$ is the 
homotopy class of the loop $(h\circ i\circ \alpha_{\sigma_h^{-1}(1)},\cdots,h\circ i\circ \alpha_{\sigma_h^{-1}(n)})$ based at $(z_1,\cdots, z_n)$ in $\Sigma_{g,p}$.

Now let $\widehat{\psi}\colon F_n(\Sigma_{g,p}) \to \Pi_{i=1}^n(\Sigma_{g,p}^n)$ be the inclusion.
Since $h\circ i\circ \alpha_{\sigma_h^{-1}(j)}$ is a  loop based in $z_j$ inside $h(D)\subset \Sigma_{g,p}$, for every $1\leq j\leq n$, and since $\eval{h}_D\colon D\to h(D)$ is a homeomorphism, we get that $\widehat{\psi}((h\circ i \circ \alpha_{\sigma_h^{-1}(1)},\cdots,h\circ i \circ \alpha_{\sigma_h^{-1}(n)}))$ is homotopic to the $n$-tuple of constant maps $(c_{z_1}, \ldots, c_{z_n})$. \\
Algebraically, the induced map $\widehat{\psi}_{\#}$ on the level of the fundamental groups corresponds to the surjective map $P_n(\Sigma_{g,p}) \to \Pi_{i=1}^n(\pi_1(\Sigma_{g,p}))$ in the short exact sequence given in \req{sesg}. 
Therefore, $\widehat{\psi}_{\#}(\Phi_0([h])(i_\#\alpha)) = 1 \in \Pi_{i=1}^n(\pi_1(\Sigma_{g,p}))$. 
Hence, $\Phi_0([h])(i_\#\alpha)=\varphi(i_\#\alpha)\in N$.
\end{proof}

\section{The R$_{\infty}$-property for $P_n(\Sigma_{g,p})$ and $B_n(\Sigma_{g,p})$}\label{sec:main} 

In this section we prove \reth{purerinfty} and \reth{totalrinfty}.

\subsection{The Fadell-Neuwirth short exact sequence}\label{subsec:fadell}

Let $S$ be a connected surface and let $n\in \mathbb{N}$.
If $m\geq 1$, the map $p\colon F_{n+m}(S)\to F_n(S)$, of the configuration space $F_{n+m}(S)$ onto $F_n(S)$, defined by $p(x_1,\ldots,x_n,\ldots,x_{n+m}) = (x_1,\ldots,x_n)$ induces a homomorphism $p_{\ast}\colon P_{n+m}(S)\to P_n(S)$.
The homomorphism $p_{\ast}$ geometrically ``forgets'' the last $m$ strings.
If $S$ is without boundary, Fadell and Neuwirth showed that $p$ is a locally-trivial fibration \cite[Theorem~1]{FaN}, with fibre $F_m(S\setminus \{ x_1,\ldots,x_n \})$ over the point $(x_1,\ldots,x_n)$, which we consider to be a subspace of the total space via the map $i\colon F_m(S\setminus \{x_1,\ldots,x_n\})\to F_{n+m}(S)$ defined by $i((y_1,\ldots,y_m)) = (x_1,\ldots,x_n,y_1,\ldots,y_m)$.
Applying the associated long exact sequence in homotopy to this fibration, we obtain the Fadell-Neuwirth short exact sequence of pure braid groups:
\begin{equation}
1\to P_{m}(S\setminus \{x_1,\ldots,x_n\}) \stackrel{i_{\ast}}{\longrightarrow} P_{n+m}(S) \stackrel{p_{\ast}}{\longrightarrow} P_n(S) \to 1
\end{equation}
where $n\geq 3$ if $S$ is the sphere \cite{Fa, FvB}, $n\geq 2$ if $S$ is the real projective plane \cite{vB}, and $n\geq 1$ otherwise \cite{FaN}, and $i_{\ast}$ is the homomorphism induced by the map $i$.
This sequence has been widely studied.
For instance, one question studied by many authors during several years was the splitting problem for surface pure braid groups, and it was completely solved, see \cite{GG2} for more details, in particular its Theorem~2. Additional information on this sequence may be seen in \cite[Section~3.1]{GPi}.

We are interested in (quotients by) the center of some surface braid groups. 
Seemingly the content of Proposition~\ref{th:annulus}  related to the braid groups over the annulus is well known for the experts in braid theory, however to the best of our knowledge there is no proof in the literature of it. Hence, for the sake of completeness, we provide a proof here. 
The following information will be useful: From \cite[Proposition~4.1]{PR}, for all $n\geq 1$, the center of the group $B_n(\Sigma_{0,2})$ is isomorphic to $\Z$ generated by $\alpha_n \in P_n(\Sigma_{0,2})$. See \cite[Figure~4.1]{PR} for a geometric description of this element.

\begin{prop}\label{th:annulus}  Let $n\geq 1$.\\
Then   $Z(B_n(\Sigma_{0,2})) = Z(P_n(\Sigma_{0,2}))$ and 
$P_{n+1} (\Sigma_{0,2})/Z(P_{n+1} (\Sigma_{0,2}))\cong P_n(\Sigma_{0,3})$.
\end{prop}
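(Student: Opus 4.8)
The plan is to treat the two assertions separately, using the Fadell–Neuwirth short exact sequence together with the known description of the center of $B_n(\Sigma_{0,2})$ recalled above. For the first equality $Z(B_n(\Sigma_{0,2})) = Z(P_n(\Sigma_{0,2}))$: by \cite[Proposition~4.1]{PR} the center $Z(B_n(\Sigma_{0,2}))$ is infinite cyclic, generated by an element $\alpha_n$ which already lies in $P_n(\Sigma_{0,2})$. So $Z(B_n(\Sigma_{0,2})) \subseteq P_n(\Sigma_{0,2})$, and since it is central in the whole of $B_n(\Sigma_{0,2})$ it is certainly central in the subgroup $P_n(\Sigma_{0,2})$, i.e.\ $Z(B_n(\Sigma_{0,2})) \subseteq Z(P_n(\Sigma_{0,2}))$. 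For the reverse inclusion I would argue that any element $z \in Z(P_n(\Sigma_{0,2}))$ is centralized by a generating set of $B_n(\Sigma_{0,2})$: it commutes with all of $P_n(\Sigma_{0,2})$ by hypothesis, and one checks it commutes with (a lift of) the remaining generators coming from $S_n$ — equivalently, that the subgroup $\langle P_n(\Sigma_{0,2}), z\rangle$ is normal and that conjugation by the extra generators fixes $z$. Here one uses that $P_n(\Sigma_{0,2})$ has trivial centralizer phenomena controlled by the structure of the annulus braid group; concretely, since $\Sigma_{0,2}$ is the annulus, $B_n(\Sigma_{0,2})$ embeds in $B_{n+1}$ (or one may use the explicit presentation), and the center of $P_n$ inside $B_n$ is well understood. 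The cleanest route is probably: $Z(P_n(\Sigma_{0,2}))$ is itself infinite cyclic (this follows from the Fadell–Neuwirth sequence and induction, as the fibre groups are free for $n\geq 2$ and centerless), generated by some power of $\alpha_n$; but $\alpha_n \in Z(B_n(\Sigma_{0,2})) \subseteq Z(P_n(\Sigma_{0,2}))$ forces $Z(P_n(\Sigma_{0,2})) = \langle \alpha_n\rangle = Z(B_n(\Sigma_{0,2}))$.

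For the second assertion, consider the Fadell–Neuwirth short exact sequence for the annulus with $n+1$ strings, splitting off the last string:
\begin{equation*}
1\to P_{1}(\Sigma_{0,2}\setminus\{x_1,\ldots,x_n\}) \stackrel{i_{\ast}}{\longrightarrow} P_{n+1}(\Sigma_{0,2}) \stackrel{p_{\ast}}{\longrightarrow} P_n(\Sigma_{0,2}) \to 1.
\end{equation*}
Wait — this forgets the last string, which is not quite what I want. Instead I would use the sequence that \emph{forgets the first string}, or more precisely observe that $\Sigma_{0,2}\setminus\{x_1,\ldots,x_n\}$ is an $(n+2)$-punctured sphere $\Sigma_{0,n+2}$, and rewrite the sequence so that the kernel is $P_1$ of that surface, i.e.\ a free group, while the quotient is $P_n(\Sigma_{0,2})$. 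Alternatively — and this is the comparison I actually want — remove \emph{one} of the two punctures of the annulus: filling in a puncture of $\Sigma_{0,3}$ gives $\Sigma_{0,2}$, and the $n$-string braid groups over these surfaces are related by adding the other puncture as an extra strand. Concretely, $\Sigma_{0,2}\setminus\{pt\} = \Sigma_{0,3}$, so $P_{n}(\Sigma_{0,3}) = P_n(\Sigma_{0,2}\setminus\{pt\})$, and the Fadell–Neuwirth sequence for $\Sigma_{0,2}$ forgetting one strand reads
\begin{equation*}
1\to P_{n}(\Sigma_{0,3}) \longrightarrow P_{n+1}(\Sigma_{0,2}) \longrightarrow P_1(\Sigma_{0,2}) \to 1,
\end{equation*}
where $P_1(\Sigma_{0,2}) = \pi_1(\Sigma_{0,2}) = \Z$. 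So $P_{n+1}(\Sigma_{0,2})$ is an extension of $\Z$ by $P_n(\Sigma_{0,3})$. The point will be that this extension is exactly a central extension, and that the central $\Z$ is precisely $Z(P_{n+1}(\Sigma_{0,2}))$; quotienting then yields $P_{n+1}(\Sigma_{0,2})/Z(P_{n+1}(\Sigma_{0,2})) \cong P_n(\Sigma_{0,3})$.

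To nail this down I would: (i) identify the generator of the kernel $P_1(\Sigma_{0,2})=\Z$ of the forgetful map with (a conjugate/power of) the central element $\alpha_{n+1}$ — geometrically, the strand being forgotten, when pushed around the core of the annulus, is exactly the generator of the center — so that the extension above is central; (ii) conclude that $Z(P_{n+1}(\Sigma_{0,2}))$ contains this $\Z$; (iii) show the quotient $P_n(\Sigma_{0,3})$ is centerless, which holds because $\Sigma_{0,3}$ has $\chi < 0$ and the iterated Fadell–Neuwirth sequences for punctured spheres realize $P_n(\Sigma_{0,3})$ as an iterated extension of free groups with trivial center (alternatively cite the known fact that surface pure braid groups with $\chi(\Sigma) < 0$ and the surface not a once-punctured torus have trivial center); (iv) deduce that $Z(P_{n+1}(\Sigma_{0,2}))$ equals exactly the central $\Z$, so the short exact sequence in (i) is $1 \to Z(P_{n+1}(\Sigma_{0,2})) \to P_{n+1}(\Sigma_{0,2}) \to P_n(\Sigma_{0,3}) \to 1$, which is the claim.

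The main obstacle I anticipate is step (i)–(ii): verifying that the $\Z$ appearing as the Fadell–Neuwirth kernel is genuinely central (not merely normal), i.e.\ that the forgotten strand loops around the annulus in a way that commutes with everything. This requires either a careful geometric argument with the explicit generator $\alpha_{n+1}$ of \cite[Figure~4.1]{PR}, or a presentation-level computation using the presentation of $P_n(\Sigma_{0,2}) = P_{1,n}$ from \resubsec{puncturedsphere}. The identification $Z(B_n(\Sigma_{0,2})) = Z(P_n(\Sigma_{0,2}))$ proved in the first part of the proposition is what makes this tractable, since it pins down the center as the cyclic group generated by $\alpha_n$. Everything else — that $P_n(\Sigma_{0,3})$ is centerless, that the quotient map is the expected one — is routine once the central extension structure is in hand.
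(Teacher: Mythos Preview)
Your overall strategy matches the paper's --- use the Fadell--Neuwirth sequence that forgets $n$ of the $n+1$ strands, together with the known central element $\alpha_{n+1}$ of \cite{PR} and the fact that $Z(P_n(\Sigma_{0,3}))=1$ from \cite[Proposition~1.6]{PR} --- but there is a genuine muddle that blocks the argument as written. In the sequence
\[
1\to P_{n}(\Sigma_{0,3}) \to P_{n+1}(\Sigma_{0,2}) \stackrel{p_\ast}{\longrightarrow} P_1(\Sigma_{0,2})\cong\Z \to 1
\]
the copy of $\Z$ is the \emph{quotient}, not the kernel. You repeatedly call it the kernel and then speak of the sequence as a ``central extension'' with central $\Z$; that is not what this sequence is. Your step~(iv) then silently swaps kernel and quotient, writing $1\to Z(P_{n+1})\to P_{n+1}\to P_n(\Sigma_{0,3})\to 1$, which is a different short exact sequence that you have not yet produced.

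The missing idea is a \emph{section}. The paper observes that $p_\ast(\alpha_{n+1})=\alpha_1$ generates $P_1(\Sigma_{0,2})$, so the assignment $\alpha_1\mapsto\alpha_{n+1}$ is a section of $p_\ast$; since $\alpha_{n+1}\in Z(B_{n+1}(\Sigma_{0,2}))$, this section lands in the centre, and the split extension is therefore a \emph{direct product}: $P_{n+1}(\Sigma_{0,2})\cong P_n(\Sigma_{0,3})\oplus\Z$. Both assertions then fall out at once: from $Z(P_n(\Sigma_{0,3}))=1$ one reads off $Z(P_{n+1}(\Sigma_{0,2}))=\langle\alpha_{n+1}\rangle=Z(B_{n+1}(\Sigma_{0,2}))$ and $P_{n+1}(\Sigma_{0,2})/Z(P_{n+1}(\Sigma_{0,2}))\cong P_n(\Sigma_{0,3})$. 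This also renders your separate attack on the equality $Z(B_n)=Z(P_n)$ unnecessary --- which is just as well, since the claim there that $Z(P_n(\Sigma_{0,2}))$ is ``generated by some power of~$\alpha_n$'' is precisely the point at issue and is not justified by the inductive sketch you give.
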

\begin{proof}
Let $n=1$, then $P_1(\Sigma_{0,2}) = B_1(\Sigma_{0,2})\cong \Z$ and so 
$Z(P_1(\Sigma_{0,2})) = Z(B_1(\Sigma_{0,2}))$. To study the situation with more than one string, we consider the
    short exact sequence induced from the Fadell-Neuwirth fibration
    $$
    1\to P_{n}(\Sigma_{0,3}) \to P_{n+1}(\Sigma_{0,2}) \stackrel{p_{\ast}}{\longrightarrow} P_{1}(\Sigma_{0,2}) \to 1
    $$
where $p_{\ast}$ geometrically forgets the last $n$ strings. 
We note that $P_{1}(\Sigma_{0,2})=\pi_{1}(\Sigma_{0,2})\cong \Z$ is generated by $\alpha_1\in P_{1}(\Sigma_{0,2})$. 
Consider the section of $p_{\ast}$ that sends $\alpha_1\in P_{1}(\Sigma_{0,2})$ onto $\alpha_{n+1}\in P_{n+1}(\Sigma_{0,2})$. 
Since $\alpha_{n+1} \in Z(B_{n+1}(\Sigma_{0,2}))$, $\alpha_{n+1} $ also commutes with all elements from 
$P_{n+1}(\Sigma_{0,2})$ and so we find that 
$$
P_{n+1}(\Sigma_{0,2}) \cong P_{n}(\Sigma_{0,3}) \oplus \Z,
$$
where $\Z$ is the subgroup of $P_{n+1}(\Sigma_{0,2})$ generated by $\alpha_{n+1}$. As $Z(P_n(\Sigma_{0,3}))=1$ \cite[Proposition~1.6]{PR}, it now readily follows that 
$Z(P_{n+1}(\Sigma_{0,2})) = Z(B_{n+1}(\Sigma_{0,2}))$ and hence
$P_{n+1} (\Sigma_{0,2})/Z(P_{n+1} (\Sigma_{0,2}))\cong P_n(\Sigma_{0,3})$.
\end{proof}

The information of some surface braid groups in the following remarks will be useful in this section. 

\begin{rems}\label{rems:sbg}
Suppose $n\geq 1$. 

\begin{enumerate}
    \item Braid groups with few strings over the sphere  
    are finite:\\ 
   $B_1(\St[2])$, $P_1(\St[2])$ and $P_2(\St[2])$ are trivial groups,  $B_2(\St[2])\cong \Z_2$, $B_3(\St[2])$ is isomorphic to $\Z_3\rtimes \Z_4$ with non-trivial action and $P_3(\St[2])\cong \Z_2$, see \cite{FvB} and also \cite[Section~4]{GPi}.

\item If $S=\St[2]$ is the sphere, then $P_{n+3}(\St[2])\cong P_n(\Sigma_{0,3})\oplus \Z_2$ (see \cite[Theorem~4]{GG1}).
We remark here that $Z(P_{n+3}(\St[2]))=\Z_2$ because $Z(P_n(\Sigma_{0,3}))=1$ \cite[Proposition~1.6]{PR}.  Hence $P_{n+3}(\St[2])/Z(P_{n+3}(\St[2]))\cong P_n(\Sigma_{0,3})$.

\item Suppose $S=D$ is the disc. It is an immediate consequence of the classical Artin presentation of $P_2(D)$ and $B_2(D)$ that they are isomorphic to $\Z$, see \cite{A2}.
Let $n\geq 3$. There is a decomposition $P_{n}(D)\cong P_{n-2}(\Sigma_{0,3})\oplus \Z$ 
that follows from the splitting of the Fadell-Neuwirth short exact sequence (see \cite[Theorem~4]{GG1}). Using 
\cite[Proposition~1.6]{PR} again, we find that  $P_{n-2}(\Sigma_{0,3})\cong P_n(D)/Z(P_n(D))$.
\end{enumerate}

\end{rems}

\subsection{The proof of \reth{purerinfty} and \reth{totalrinfty}}

Recall that we split the orientable surfaces of finite type into three families, as follows:
\begin{itemize}
	\item[$\mathcal{F}_1$:] The punctured sphere  $\mathbb{S}^2$ with $p$ points removed for $p=0,1,2$.  

\item[$\mathcal{F}_2$:] 
\begin{itemize}
	\item[a)] Orientable closed surfaces different from $\mathbb{S}^2$, $T^2$. 
	\item[b)] Orientable punctured surfaces $\Sigma_{g,p}$ where $g$ is the genus and $p$ is the number of punctures in the closed surface $\Sigma_g$, for: 
\begin{itemize}
	\item[i)] $g=0$ and $p\geq 3$,
	\item[ii)] $g=1$ and $p\geq 2$,
	\item[iii)] $g\geq 2$ and $p\geq 1$. 
\end{itemize}
	\end{itemize}
	
\item[$ \mathcal{F}_3$:] The torus and the once punctured torus.

\end{itemize}

We will show the results for the families $\mathcal{F}_1$ and $\mathcal{F}_2$, using two different arguments.    
The case $\mathcal{F}_3$ is work in progress.

We shall use the following technique
in order to prove that the pure braid groups of surfaces $\Sigma_{g,p}$ (closed or punctured) from $\mathcal{F}_2$ have the R$_{\infty}$-property whenever $\pi_1(\Sigma_{g,p})$ has the R$_{\infty}$-property. 
If $\alpha$ is an automorphism of a group $G$ and $N$ is a normal subgroup of $N$ with $\alpha(N)=N$ (e.g.\ when $N$ is a characteristic subgroup of $G$) then $\alpha$ induces an automorphism $\bar\alpha$ of $G/N$. It is easy to see that $R(\alpha) \geq R(\bar\alpha)$, so if $R(\bar\alpha)= \infty$, then also $R(\alpha)=\infty$. 
For all $\Sigma_{g,p}$ of family $\mathcal{F}_2$, it holds that $\pi_1(\Sigma_{g,p})$ has the R$_{\infty}$-property.

This does not longer hold for the surfaces of family $\mathcal{F}_1$ and we will solve this case using a different argument.

We will state the  result for the family $\mathcal{F}_2$. 	To prove the result we will make use of \relem{gold1} for 
all surface $\Sigma_{g,p}$ in $\mathcal{F}_2$, except for the case in which $g=0$ and $p=3$, since the validity of 
the equation  \eqref{autpn} is not covered by An \cite{A}, because $\chi(\Sigma_{g,p}) = -1$. So, for this special case  of the family 
$\mathcal{F}_2$  we shall use another approach.

  \begin{prop}\label{mainI} For any surface $\Sigma_{g,p}\in \mathcal{F}_2$ we have that  $P_n(\Sigma_{g,p})$ has the R$_{\infty}$-property for $n\geq 1$.
  \end{prop}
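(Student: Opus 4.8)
The plan is to split the argument into two disjoint cases according to whether the surface $\Sigma_{g,p}$ has Euler characteristic $\chi(\Sigma_{g,p}) < -1$ or $\chi(\Sigma_{g,p}) = -1$, the latter occurring within $\mathcal{F}_2$ only when $(g,p) = (0,3)$, i.e.\ the thrice-punctured sphere. In both cases the guiding principle is the quotient trick recorded just before the statement: if $N \trianglelefteq G$ is $\varphi$-invariant for an automorphism $\varphi$, then $\varphi$ descends to $\bar\varphi$ on $G/N$ and $R(\varphi) \geq R(\bar\varphi)$; so to force $R(\varphi) = \infty$ it suffices to exhibit a characteristic quotient with the R$_\infty$-property. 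The target quotient is the one from Goldberg's sequence \req{sesg}, namely $P_n(\Sigma_{g,p})/N \cong \prod_{i=1}^n \pi_1(\Sigma_{g,p})$, a finite direct product of copies of $\pi_1(\Sigma_{g,p})$.

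For the main case $\chi(\Sigma_{g,p}) < -1$ (which covers families $\mathcal{F}_2$(a), $\mathcal{F}_2$(b)(i) with $p \geq 4$, $\mathcal{F}_2$(b)(ii), and $\mathcal{F}_2$(b)(iii)): by \relem{gold1} the subgroup $N$ is characteristic in $P_n(\Sigma_{g,p})$, so every automorphism $\varphi$ induces $\bar\varphi$ on $\prod_{i=1}^n \pi_1(\Sigma_{g,p})$. Now I would invoke the fact that for every $\Sigma_{g,p}$ in $\mathcal{F}_2$ the fundamental group $\pi_1(\Sigma_{g,p})$ has the R$_\infty$-property (for closed hyperbolic surfaces and for surfaces with punctures of negative Euler characteristic this is classical — they are non-elementary Gromov hyperbolic groups, which have R$_\infty$ by \cite{F,LL}), together with the standard fact that a finite direct product of copies of a single group with the R$_\infty$-property again has the R$_\infty$-property. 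The latter needs a short argument: an automorphism of $H^n$ for $H$ with trivial (or suitably small) abelianization-obstruction and no nontrivial homomorphisms to other factors must permute the factors up to automorphism, so $R(\bar\varphi)$ is a product of Reidemeister numbers of automorphisms of $H$ (composed around cycles of the permutation), each of which is $\infty$. Hence $R(\bar\varphi) = \infty$ and therefore $R(\varphi) = \infty$. For $n = 1$ the statement is just that $\pi_1(\Sigma_{g,p})$ has R$_\infty$, which is the input fact.

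For the remaining case $(g,p) = (0,3)$, where \relem{gold1} does not apply because $\chi = -1$ falls outside An's isomorphism \eqref{autpn}, I would instead use the identification $P_n(\Sigma_{0,3}) \cong P_{n+3}(\mathbb{S}^2)/Z(P_{n+3}(\mathbb{S}^2))$ from \rerems{sbg}(2) (equivalently $\cong P_{n+1}(\Sigma_{0,2})/Z$ from \repr{th:annulus}, or $\cong P_{n+2}(D)/Z$ from \rerems{sbg}(3)). But actually the cleanest route for $\Sigma_{0,3}$ is still Goldberg's sequence \eqref{eq:sesg} itself: $P_n(\Sigma_{0,3})/N \cong \prod_{i=1}^n \pi_1(\Sigma_{0,3})$, and $\pi_1(\Sigma_{0,3})$ is free of rank $2$, which has the R$_\infty$-property. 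So I need $N$ to be characteristic in $P_n(\Sigma_{0,3})$ by some other means; here I would argue directly using the center. The center $Z(P_n(\Sigma_{0,3})) = 1$ by \cite[Proposition~1.6]{PR}, so there is no help from the center directly, but one can instead exploit that $B_n(\Sigma_{0,3}) \cong A(\widetilde{C}_n)$ has a well-understood automorphism group, or fall back to a low-string direct check combined with the forgetful fibration. This $(g,p)=(0,3)$ case is the main obstacle: the clean characteristicity lemma is unavailable, so I expect to need a somewhat ad hoc argument — most likely showing that $N$ is the unique smallest normal subgroup with free-abelian-by-(direct product of free groups) quotient of the right type, or transporting an automorphism of $P_n(\Sigma_{0,3})$ up to one of $P_{n+1}(\Sigma_{0,2})$ via \repr{th:annulus} where the center is canonical and \relem{gold1}-type reasoning (applied to $\Sigma_{0,2}$, whose relevant braid groups have R$_\infty$ by the Artin–Tits identifications in the opening remark) can be brought to bear. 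Once $N$ is shown characteristic (or the automorphism is shown to respect the relevant filtration), the quotient argument closes the case exactly as before, giving $R(\varphi) \geq R(\bar\varphi) = \infty$.
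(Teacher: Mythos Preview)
Your treatment of the main case $\chi(\Sigma_{g,p})<-1$ is essentially the paper's argument: invoke \relem{gold1} so that Goldberg's quotient $\prod_{i=1}^n\pi_1(\Sigma_{g,p})$ is a characteristic quotient, and then use that this product has the R$_\infty$-property. One caveat: the ``standard fact'' that $H^n$ has R$_\infty$ whenever $H$ does is \emph{not} automatic---there are groups $H$ with R$_\infty$ for which $H\times H$ fails to have it---and your sketch about factor-permuting automorphisms is not a proof. The paper handles this by citing \cite[Corollary~4.5]{S}, which gives precisely the needed result for the groups at hand.

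The genuine gap is the case $(g,p)=(0,3)$. You correctly identify that \relem{gold1} is unavailable, and you even write down the key identification $P_n(\Sigma_{0,3})\cong P_{n+2}(D)/Z(P_{n+2}(D))$ from \rerems{sbg}(c)---and then you abandon it in favour of trying to salvage characteristicity of $N$ by ad hoc means, which you do not complete. The paper's route is exactly the one you set aside: since $P_n(\Sigma_{0,3})\cong \overline{P}_{n+2}:=P_{n+2}(D)/Z(P_{n+2}(D))$, and since \cite{DGO} already established (in the course of proving R$_\infty$ for the pure Artin braid groups) that $\overline{P}_{n+2}$ has the R$_\infty$-property, the case is finished in one line. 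There is no need to show $N$ is characteristic in $P_n(\Sigma_{0,3})$, nor to lift automorphisms to $P_{n+1}(\Sigma_{0,2})$; the isomorphism with a group already known to have R$_\infty$ suffices.
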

  \begin{proof} For  $n=1$ the group $P_1(\Sigma_{g,p})=\pi_1(\Sigma_{g,p})$ is the fundamental group of the surface. For every surface 
in the family $\mathcal{F}_2$ its fundamental group has the R$_{\infty}$-property. 
  Indeed, for all $\Sigma_{g,p} \in \mathcal{F}_2$, it holds that $\chi(\Sigma_{g,p})<0$ and so $\pi_1(\Sigma_{g,p})$ is non-elementary hyperbolic, hence the result follows from \cite{LL}, see also \cite{F}.

Let $\Sigma_{0,3}$ be the pantalon. 
Recall that  
$P_n(\Sigma_{0,3})\cong P_{n+2}(D)/Z(P_{n+2}(D))$  (see  \rerems{sbg} in \resubsec{fadell}). 
 In \cite{DGO} the group  $P_{n+2}(D)/Z(P_{n+2}(D))$ 
was denoted by  $\overline P_{n+2}$. Moreover,  in order to prove the main result of \cite{DGO} it was shown that  this group has the R$_{\infty}$-property (see the last two lines of page 17 of \cite{DGO}). 

Let $n\geq 2$ and let $\Sigma_{g,p}\neq \Sigma_{0,3}$ be a surface from ${\mathcal F}_2$. From \relem{gold1}  the short exact sequence given in \req{sesg} is characteristic, then 
we conclude the result in these cases since $\Pi_{i=1}^n(\pi_1(S))$ has the R$_{\infty}$-property by \cite[Corollary~4.5]{S}. 
\end{proof}

Now we move to the surfaces of the family $\mathcal{F}_1$.
We notice that, for $n\geq 2$, the result that $P_n(\Sigma_{0,2})$ has the R$_{\infty}$-property  was already proved in \cite{CS}, using a different technique from the ones used  here.

\begin{prop}\label{mainII}  Let  $\Sigma_{g,p}$ be a surface of the family $\mathcal{F}_1$. Then $P_n(\Sigma_{g,p})$ has the R$_{\infty}$-property if and only if one of the following cases holds:
\begin{enumerate}   
\item  $\Sigma_{g,p}=\mathbb{S}^2$ and $n\geq 4$, 
\item  $\Sigma_{g,p}=\mathbb{S}^2\setminus\{x_1\}$ and $n\geq 3$,
\item  $\Sigma_{g,p}=\mathbb{S}^2\setminus\{x_1, x_2\}$ and $n\geq 2$.
\end{enumerate}

\end{prop}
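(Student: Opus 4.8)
The plan is to handle the three surfaces of $\mathcal{F}_1$ — namely $\mathbb{S}^2$, the disc $D = \Sigma_{0,1}$, and the annulus $\Sigma_{0,2}$ — by reducing each case to facts already available: the structure results collected in Remarks~\ref{rems:sbg} and Proposition~\ref{th:annulus}, together with the known $R_\infty$-results for the pure Artin braid groups $P_m(D)$ and their central quotients $\overline{P}_m = P_m(D)/Z(P_m(D))$ from \cite{DGO}. The guiding observation is the elementary one stated just before Proposition~\ref{mainI}: if $N$ is a characteristic subgroup of $G$ (in particular the center $Z(G)$, which is always characteristic), then $R(\alpha) \geq R(\bar\alpha)$ for the induced automorphism $\bar\alpha$ of $G/N$; hence $G/N$ having $R_\infty$ forces $G$ to have $R_\infty$.

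First I would dispose of the finitely many exceptional small-string cases where $R_\infty$ \emph{fails}: by Remarks~\ref{rems:sbg}(a) the groups $P_2(\mathbb{S}^2)$ and $P_3(\mathbb{S}^2)$ are trivial and $\mathbb{Z}_2$ respectively, and $P_2(D) \cong \mathbb{Z}$; none of these has $R_\infty$ (a trivial or finite group has an automorphism — the identity — with finite Reidemeister number, and $\mathbb{Z}$ has the inversion automorphism with $R = 2$). This matches the ``only if'' direction and the statement in Remark after Table~\ref{tab:info2}. Then, for the positive direction over the annulus $\Sigma_{0,2}$: for $n \geq 2$ I would use Proposition~\ref{th:annulus}, which gives $P_n(\Sigma_{0,2})/Z(P_n(\Sigma_{0,2})) \cong P_{n-1}(\Sigma_{0,3})$, and then invoke that $P_{n-1}(\Sigma_{0,3})$ has $R_\infty$ for $n-1 \geq 1$ — which is exactly what is proved inside Proposition~\ref{mainI} (via $P_{n-1}(\Sigma_{0,3}) \cong \overline{P}_{n+1}$ and \cite{DGO}). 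Since $Z(\cdot)$ is characteristic, $P_n(\Sigma_{0,2})$ inherits $R_\infty$. For $n = 1$ we have $P_1(\Sigma_{0,2}) \cong \mathbb{Z}$, which does not have $R_\infty$, consistent with the stated range $n \geq 2$.

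Next, for the disc $D = \mathbb{S}^2 \setminus \{x_1\}$ with $n \geq 3$: Remarks~\ref{rems:sbg}(c) gives $P_n(D)/Z(P_n(D)) \cong P_{n-2}(\Sigma_{0,3})$, and again $P_{n-2}(\Sigma_{0,3})$ has $R_\infty$ for $n - 2 \geq 1$, so $P_n(D)$ has $R_\infty$; for $n = 2$, $P_2(D) \cong \mathbb{Z}$ fails, giving the cutoff $n \geq 3$. Finally, for the sphere $\mathbb{S}^2$ with $n \geq 4$: Remarks~\ref{rems:sbg}(b) gives $P_{n}(\mathbb{S}^2)/Z(P_{n}(\mathbb{S}^2)) \cong P_{n-3}(\Sigma_{0,3})$ when $n \geq 3$, and this quotient has $R_\infty$ precisely when $n - 3 \geq 1$, i.e. $n \geq 4$; combined with the triviality/finiteness of $P_2(\mathbb{S}^2)$ and $P_3(\mathbb{S}^2)$ noted above, this yields the exact range $n \geq 4$. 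The only genuine subtlety — the part I would write out carefully — is the equivalence (the ``if and only if''): one must confirm that in the three small cases the groups really do admit an automorphism with finite Reidemeister number, and that the identification of $\overline{P}_m$ as a surface braid group together with the result of \cite{DGO} does cover all the remaining string counts without a gap; the structural isomorphisms themselves are quoted verbatim from Remarks~\ref{rems:sbg} and Proposition~\ref{th:annulus}, so no new braid-group computation is needed.
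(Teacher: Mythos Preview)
Your proposal is correct and follows essentially the same route as the paper: dispose of the small-string cases where the group is trivial, finite, or $\mathbb{Z}$, and in every remaining case pass to the characteristic quotient by the center, identify it with $P_m(\Sigma_{0,3})$ via Remarks~\ref{rems:sbg} or Proposition~\ref{th:annulus}, and invoke Proposition~\ref{mainI}. The only cosmetic difference is that for the disc $\Sigma_{0,1}$ the paper cites the main result of \cite{DGO} directly rather than passing through $P_n(D)/Z(P_n(D))\cong P_{n-2}(\Sigma_{0,3})$, but since the $\Sigma_{0,3}$ case of Proposition~\ref{mainI} is itself proved by that very identification with $\overline{P}_{n}$ from \cite{DGO}, the two arguments are the same.
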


\begin{proof}  The proof is case by case, but first we deal with the exceptional cases. 
It is well known (see \rerems{sbg} in \resubsec{fadell}) that $P_1(\mathbb{S}^2)=P_2(\mathbb{S}^2)=\{1\}$, $P_3(\mathbb{S}^2)=\Z_2$, $P_1(\mathbb{S}^2\setminus\{x_1\})=\{1\}$, $P_2(\mathbb{S}^2\setminus\{x_1\})=\Z$, $P_1(\mathbb{S}^2\setminus\{x_1, x_2\})=\Z$
and all these groups do not have the R$_{\infty}$-property.

For $\Sigma_{g,p}=\mathbb{S}^2$ and $n\geq4$ we have that $P_n(\mathbb{S}^2)/Z(P_n(\mathbb{S}^2))\cong P_{n-3}(\mathbb{S}^2\setminus\{x_1,x_2,x_3\})$ (see \rerems{sbg} in \resubsec{fadell}). 
From  Proposition~\ref{mainI},    $P_{n-3}(\mathbb{S}^2\setminus\{x_1,x_2,x_3\})$ has the R$_{\infty}$-property for $n-3\geq 1$,  
then the result follows since $Z(P_n(\mathbb{S}^2))$ is a characteristic subgroup of $P_n(\mathbb{S}^2)$.
Let  $\Sigma_{g,p}=\mathbb{S}^2\setminus\{x_1\}$, then $P_n(\Sigma_{g,p})$ is the pure Artin braid group and the result follows from the main result of \cite{DGO}.

  For   $\Sigma_{g,p}=\mathbb{S}^2\setminus\{x_1, x_2\}$ and $n\geq 2$ we know 
  $P_n(\mathbb{S}^2\setminus\{x_1, x_2\})/Z(P_n(\mathbb{S}^2\setminus\{x_1, x_2\})) \cong P_{n-1}(\mathbb{S}^2\setminus\{x_1,x_2,x_3\})$, by Proposition~\ref{th:annulus}. Again by using the fact that the centre is a characteristic subgroup and using Proposition~\ref{mainI} for $\Sigma_{0,3}$ the result follows. 
    \end{proof}

From the discussion above we may prove the main result about the R$_{\infty}$-property for surface pure braid group $P_n(\Sigma_{g,p})$.

\begin{proof}[Proof of \reth{purerinfty}]  The result follows immediately from Propositions \ref{mainI} and \ref{mainII}.
\end{proof}

Now we consider the groups $B_n(\Sigma_{g,p})$ for $n\geq 2$, and we will make use of the short exact sequence
 $$1\to P_n(\Sigma_{g,p})\to B_n(\Sigma_{g,p})\to S_n \to 1.$$ 
We recall that by  \cite[Theorem 1.5]{A} this short exact sequence is characteristic unless we are the case that 
$\Sigma_{g,p}=\Sigma_{0, 2}$ and $n=2$. In the  cases where the   short exact sequence is characteristic  we will use the following result given by   \cite[Lemma~6]{MS}:
 Let $1\to A\to B\to C\to 1$ be a characteristic short exact sequence with respect to automorphisms of $B$  such that $C$ is finite and $A$ has the R$_{\infty}$-property.  Then    $B$ has the R$_{\infty}$-property. 
To analyse the case $B_2(\Sigma_{0,2})$ we will use a different approach.

Now we prove the main result for the groups $B_n(\Sigma_{g,p})$. 
We notice that, for $n\geq 2$, the result that $B_n(\Sigma_{g,p})$ has the R$_{\infty}$-property  was already proved, using  different techniques from the ones here, in \cite{FG} for $\Sigma_{g,p}=\mathbb{S}^2$  and $\Sigma_{g,p}=D^2$,
and in \cite{CS} for $\Sigma_{g,p}=\Sigma_{0,2}$ and $\Sigma_{g,p}=\Sigma_{0,3}$.

\begin{proof}[Proof of \reth{totalrinfty}]
Recall that for any surface $S_{g,p}$ there exists a short exact sequence
$$
1\to P_n(S_{g,p}) \to B_n(S_{g,p}) \to S_n \to 1.
$$
 By  \cite[Theorem 1.5]{A} this short exact sequence is characteristic as long as $\Sigma_{g,p}\ne \Sigma_{0, 2}$ or $n>2$. 
 For the if part let  $P_n(\Sigma_{g,p})$ has the R$_{\infty}$-property. From   \cite[Lemma~6]{MS}   the result follows for $B_n(\Sigma_{g,p})$. 
The only remaining case is when $\Sigma_{g,p}=\Sigma_{0, 2}$ and $n=2$. But from \cite[Proposition~2.1~(2)]{CrPa}, $B_2(\Sigma_{0, 2})$ is isomorphic 
to  $F_2(x,y)\rtimes_{\theta} \mathbb{Z}$. The semi-direct product is determined  by the  action given by the automorphism $\theta(1)(x)=y$,
$\theta(1)(y)=y^{-1}xy$.  Now it follows from  \cite[Theorem 4.4]{FGW} that  $B_2(\Sigma_{0, 2})$ has the R$_{\infty}$-property (independent of the action).

For the only if part, observe that the only cases where $B_n(\Sigma_{g,p})$ ($n>1$) does not have the R$_{\infty}$-property are:
\begin{itemize}
	\item[a)]   If $\Sigma_{g,p}=\mathbb{S}^2$ for  $2\leq n\leq 3$ because the groups are finite;
	\item[b)]  If $\Sigma_{g,p}=\mathbb{S}^2\setminus\{x_1\}$ for  $n=2$ because the group is isomorphic to $\Z$.
\end{itemize}
 In both cases the groups $P_n(\Sigma_{g,p})$ also does not have 
the R$_{\infty}$-property, see Theorem~\ref{th:purerinfty}. So the result follows.
\end{proof}

\end{document}